\newtheorem{remark}[theorem]{Remark}
\newtheorem{open lemma}[theorem]{Open Lemma}
\newcommand{\algorithmicbreak}{\textbf{break}}
\newcommand{\BREAK}{\STATE \algorithmicbreak}
\newcommand{\mc}{\mathcal}
\newcommand{\trunc}[2]{\mc T\left(#1,#2\right)}
\newcommand{\bs}{\boldsymbol}
\newcommand{\J}{\mathcal{J}}
\newcommand{\R}{\mathbb R}
\newcommand{\N}{\mathbb N}
\newcommand{\cP}{\mathcal{P}}
\newcommand{\eps}{\varepsilon_1}
\newcommand{\epss}{\varepsilon_2}
\newcommand{\pp}{\tilde{p}}
\newcommand{\qq}{\tilde{q}}
\newcommand{\gt}{\tilde{g}}
\newcommand{\CG}{\text{CG}}
\newcommand\ds[1]{\displaystyle{#1}}
\DeclarePairedDelimiter\innerp{\langle}{\rangle}
\DeclareMathOperator*{\argmin}{arg\,\min}
\DeclareMathOperator{\supp}{supp}
\author{Mazen Ali\footnotemark[1]
	\and
	Karsten Urban%
	\thanks{Ulm University, Inst.\ f.\ Numerical Mathematics,
	Helmholtzstr.\ 20, D-89081 Ulm, Germany,
	\{mazen.ali,karsten.urban\}@uni-ulm.de}
	}
\title{HT-AWGM: A Hierarchical Tucker--Adaptive Wavelet
Galerkin Method for High Dimensional Elliptic Problems.}
\begin{document}

\maketitle
\begin{abstract}
This paper is concerned with the construction, analysis and realization of
a numerical method to approximate the solution of high dimensional elliptic
partial differential equations.
We propose a new combination of an Adaptive Wavelet Galerkin Method (AWGM) and
the well-known Hierarchical Tensor (HT) format.
The arising HT-AWGM is adaptive both in the wavelet representation of the low
dimensional factors and in the tensor rank of the HT representation.

The point of departure is an adaptive wavelet method for the HT format
using approximate Richardson iterations from \cite{DB} and an AWGM method
as described in \cite{GHS}.
HT-AWGM performs a sequence of Galerkin
solves based upon a truncated preconditioned conjugate gradient (PCG) algorithm
from \cite{ToblerPhd} in combination with a tensor-based
preconditioner from \cite{MarkusSobolev}.

Our analysis starts by showing convergence of the truncated
conjugate gradient method.
The next step is to add routines realizing the adaptive refinement.
The resulting HT-AWGM is analyzed concerning convergence and complexity.
We show that the performance of the 
scheme asymptotically depends only on the desired tolerance
with convergence rates depending on the Besov regularity of low dimensional
quantities and the low rank tensor structure of the solution.
The complexity in the ranks is algebraic with powers of four stemming
from the complexity of the tensor truncation. Numerical experiments
show the quantitative performance.
\end{abstract}

\begin{keywords}
    High Dimensional, Hierarchical Tucker, Low-Rank Tensor Methods,
    Adaptive Wavelet Galerkin Methods, Partial Differential Equations
\end{keywords}

\begin{AMS}
    65N99
\end{AMS}

\section{Introduction}

The increase of available computational power made a variety of complex problems accessible for computer-based simulations. However, the complexity of problems has increased even faster, so that several `real-world' problems will be out of reach even with computers of the next generations. One class of such challenging problems arises from high-dimensional models suffering from the \emph{curse of dimensionality}. This shows the ultimate need to construct and analyze sophisticated numerical methods. 

This paper is concerned with high-dimensional systems of elliptic partial differential equations (PDEs). Examples include chemical reactions, financial derivatives, equations depending on a large number of parameters (e.g.\ material properties) or a large number of independent variables. In general terms, we consider an operator problem $Au=f$, where $A:\mc X\to \mc X'$ is elliptic\footnote{We assume that $\mc X\hookrightarrow \mc H\hookrightarrow \mc X'$ is a Gelfand triple with a pivot Hilbert space $\mc H$ and $\mc X'$ is the dual space of $\mc X$
induced by $\mc H$.}, $f\in \mc X'$ is given and $u\in\mc X$ is the desired solution, which we aim to approximate in a possible `sparse' manner.

Of course, this issue also depends on the specific notion of sparsity, which itself is typically adapted to the problem. In the context of adaptive methods (think of adaptive finite element or wavelet methods), the sparsity benchmark is a \emph{Best $N$-term} approximation, i.e., a possibly optimal approximation to $u\in \mc X$ using $N\in\N$ degrees of freedom. In particular for high-dimensional problems, one tries to approximate $u$ in terms of \emph{low rank} tensor format approximations. We will combine these two notions to be explained next.

\subsubsection*{Best $N$-term Approximation}
Given a dictionary (basis, frame) $\Psi:=\{\psi_\lambda:\lambda\in\mc J\}\subset\mc X$, where the index set $\mc J$ is typically of infinite cardinality, one seeks an approximate expansion of $u$ in $\Psi$. A best $N$-term approximation is of the form $u\approx u_N:=\sum_{\lambda\in\Lambda}c_\lambda\psi_\lambda$, $c_\lambda\in\R$ and $\Lambda\subset\mc J$ is of cardinality $N\in\N$, i.e., $|\Lambda|=N$.The goal of an optimal approximation can also
be expressed by determining the minimal number of terms $N(\varepsilon)$ required to achieve a certain accuracy $\varepsilon>0$: $\| u-u_{N(\varepsilon)}\|_{\mc X}\le\varepsilon$.

It is known that the optimal speed of convergence of such approximations entirely depends on the properties of the solution $u$
and the chosen basis. In fact, there is an intimate connection between decay of the error of the best $N$-term approximation  and the Besov regularity of $u$, see \cite{DeVore}. An approximation scheme (or algorithm) is called \emph{quasi-optimal} if it realizes (asymptotically) the same rate as the $N$-term approximation. Known quasi-optimal methods are adaptive in the sense that approximations are constructed in nonlinear manifolds rather than in linear subspaces. 

For Adaptive Finite Element Methods (AFEM, \cite{AFEM}) and Adaptive Wavelet Methods (AWM, e.g.\ \cite{CDD1,CDD2,GHS}) there are quasi-optimal algorithms known, in particular for elliptic problems. 

\subsubsection*{Low-Rank Tensor Methods}
For high-dimensional problems ($d\gg 1$), it is well-known that most algorithms scale
exponentially in the dimension and are thus intractable: they suffer from the curse of dimensionality. If the operator $A$ has a tensor structure (or can at least be well-approximated by such), one can try to find an efficient separable approximation
\begin{align}\label{eq:seprep}
    u\approx\sum_{i=1}^r\bigotimes_{j=1}^dv_j^i,
\end{align}
where $r$ is referred to as the \emph{rank} and $v_1\otimes\cdots\otimes v_d(x):= v_1(x_1)\cdots v_d(x_d)$ for $x=(x_1,\ldots ,x_d)\in\R^d$ is a tensor product. Hence, if the rank $r$ is small even for large $d$, one can try to approximate the univariate factors $v_j^i:\R\rightarrow\R$ separately resulting in a tractable algorithm. 

A major breakthrough in this area was the development of tensor formats that in fact realized such approximations. We mention \emph{the hierarchical Tucker (HT) format} \cite{HT}, the \emph{tensor train format} \cite{TT} and refer to \cite{Hackbusch} for a general overview. Nowadays, there is a whole variety of algorithms that have been
developed in these formats, both iterative solvers \cite{BallaniGrasedyck, Khoromskij, KS, KT} (using basic arithmetic operations on tensors and truncations to control the rank)  and direct methods \cite{MasterEq, ALS, QuanticsTT, OD}, which work within the tensor structure itself. For a survey on tensor methods for solving high-dimensional PDEs we refer to \cite{TensorOverview}.

\subsubsection*{HTucker-Adaptive Wavelet Galerkin Method (HT-AWGM)}
In this paper, we consider a combination of best $N$-term and low rank approximations in order to obtain a convergent algorithm that is optimal both w.r.t.\ $N$ and the tensor rank $r$. To this end, we use appropriate wavelet bases $\Psi$, i.e., the factors in \cref{eq:seprep} are approximated by sparse wavelet expansions
\begin{align*}
    v_j^i=\sum_{\lambda\in\Lambda_j^i}
    c_\lambda^{i,j}\psi_\lambda^j,\quad c_\lambda^{i,j}\in\R.
\end{align*}

To the best of our knowledge, the first such approximation was constructed in
\cite{DB}, where inexact Richardson iterations from \cite{CDD2} were combined
with the HT format from \cite{HT}.
In \cite{ST}, the authors considered soft threshholding techniques for the rank
reduction.
Though convergence and complexity estimates
were provided, it is still unclear what is the correct notion of optimality
for high-dimensional problems.

The goal of this paper is to extend the AWGM method to the
high dimensional setting using the HT format -- resulting in an \emph{HT-AWGM}.
In particular, we aim at providing the corresponding convergence analysis.
A core ingredient of AWGM is the fact that wavelet bases can be used to
rewrite the operator equation $Au=f$ equivalently into an equation
$\bs A\bs u=\bs f$ in sequence spaces, where $\bs A$ is boundedly invertible.
The backbone of that is optimal wavelet preconditioning.
Hence, a tensor-based wavelet preconditioner is needed.
Luckily, in \cite{MarkusSobolev} the problem of separable preconditioning
was addressed and the algorithm from \cite{DB} was extended
to the elliptic case.

\subsubsection*{Organization of the Paper}
The remainder of this paper is organized as follows.
In Section \ref{sec:prelim}, we collect all required preliminaries.
As a core ingredient for the new HT-AWGM,
we use a truncated PCG algorithm from \cite[Algorithm 9]{ToblerPhd}
and analyze its convergence in Section \ref{sec:pdescent}.
The convergence and complexity analysis of the full HT-AWGM is described in
Section \ref{sec:awgm}.
We show numerical results in Section \ref{sec:numexp}. We indicate the
potential and remaining issues of the method.


\section{Preliminaries}\label{sec:prelim}

We start by briefly reviewing some basic facts on adaptive wavelet methods, low rank tensor formats and the preconditioning problem arising in connection with tensor spaces.

\subsection{(Quasi-)optimal Approximations}\label{sec:wavs}
For the remainder of this work we use the shorthand notation
\begin{align*}
    A\lesssim B,
\end{align*}
to indicate there exists a constant $C>0$ independent of $A$ and $B$ such
that $A\leq CB$. The notation $A\gtrsim B$ is defined analogously.

The introduction mainly follows \cite{Stevenson}. We seek the solution of the operator equation
\begin{align}
    A u=f,\quad A:\mc X\rightarrow\mc X',\quad u\in\mc X,\quad f\in\mc X',
    \label{eq:opeq}
\end{align}
where $A$ is a linear boundedly invertible operator and $\mc X$ is a
separable Hilbert Space. Given a Riesz basis $\Psi:=\left\{\psi_\lambda:\lambda\in\J\right\}$,
e.g., a wavelet basis, and the corresponding boundedly invertible analysis and synthesis
operators
\begin{align*}
    \mc F:\mc X'\rightarrow\ell_2(\J),\quad f\mapsto\{f(\psi_\lambda)\}_\lambda,
    &\qquad&
    \mc F':\ell_2(\J)\rightarrow\mc X,\quad \{c_\lambda\}_\lambda\mapsto
    \sum_{\lambda\in\J}c_\lambda\psi_\lambda,
\end{align*}
we can reformulate \cref{eq:opeq} equivalently as a discrete infinite dimensional linear
system
\begin{align}
    \bs A\bs u=\bs f,\quad \bs A:\ell_2(\J)\rightarrow\ell_2(\J),
    \quad \bs u,
    \bs f\in\ell_2(\J),\label{eq:opeqdisc}
\end{align}
with $\bs A:=\mc F A\mc F'$, $\bs u:=\mc F\mc R u$ and $\bs f:=\mc F f$, where $\mc R:\mc X\rightarrow\mc X'$ is the Riesz isomorphism. The operator $\bs A$ inherits the properties of its continuous counterpart $A$ and is in particular boundedly invertible as well.

Next, we introduce the notation for the Galerkin problem. Let
$\Lambda\subset\J$ be some finite index subset. We introduce
the restriction operator $R_{\Lambda}:\ell_2(\J)\rightarrow
\ell_2(\Lambda)$, which simply drops all entries outside
$\Lambda$. Likewise the extension operator
$E_{\Lambda}:\ell_2(\Lambda)\rightarrow
\ell_2(\J)$ pads all entries outside $\Lambda$ with zeros. We will
sometimes employ the notation $\bs A_{\Lambda}:=R_{\Lambda}
\bs A E_{\Lambda}$ to denote the discretized wavelet operator.

The benchmark for optimal approximations is the \emph{best $N$-term
approximation}
\begin{align*}
    u_N:=\argmin\Big\{\|u-v\|_{\mc X}:v\in\mc X,\;
    v=\sum_{\lambda\in \Lambda\subset\J}v_\lambda\psi_\lambda,\;
    \#\Lambda\leq N\Big\},
\end{align*}
or, equivalently, in $\ell_2(\J)$
\begin{align*}
    \bs u_N:=\argmin\left\{\|\bs u-\bs v\|_{\ell_2}:\bs v\in\ell_2(\J),\;
    \#\supp(\bs v)\leq N\right\},
\end{align*}
where $\supp(\bs v)$ denotes those wavelet indices $\lambda\in\J$, for which $\bs v_\lambda\neq 0$.
Note that, as opposed to linear approximation techniques, we seek an
approximation
in an $N$-dimensional nonlinear manifold.
The approximation class of all best $N$-term approximations converging with
rate $s$ is known as
\begin{align}\label{eq:bestN}
    \mc A_s:=\Big\{\bs u\in\ell_2(\J):
    \|\bs u\|_{\mc A_s}:=\sup_{\varepsilon>0}\varepsilon
    [\min\{N\in\N_0 :\|\bs u-\bs u_N\|_{\ell_2}\leq\varepsilon\}]^s<\infty
    \Big\}.
\end{align}
It is known that such approximation spaces are interpolation spaces between
$L_p$ and certain Besov spaces, which establishes a direct link between regularity and
approximation classes, see also \cite{DeVore} for more details.

An adaptive wavelet method is called \emph{(quasi-)optimal} whenever it
produces for
$\bs u\in\mc A_s$ an approximation $\bs v$ to $\bs u$ with
$\|\bs u-\bs v\|_{\ell_2}\leq\varepsilon$, such that $\#\supp(\bs v)\lesssim
\varepsilon^{-1/s}\|\bs u\|^{1/s}_{\mc A_s}$ and the number of operators is
bounded by a multiple of the same quantity. In other words, given that
$\bs u$ is in
a certain approximation class, an optimal adaptive
method achieves the best possible asymptotic rate of convergence
in linear computational
complexity of the output size.

There are two classical approaches to implementing such an optimal adaptive
wavelet method (see \cite{CDD1,CDD2,GHS}). The
first\footnote{Chronologically, however, the second.}
applies an \emph{inexact iteration method} such as the Richardson iteration, to the
bi-infinite discrete system in \cref{eq:opeqdisc}. The second one, in the
spirit of adaptive FEM methods, produces a sequence $\Lambda^{(0)}\rightarrow
\Lambda^{(1)}\rightarrow\cdots$ of finite index sets and solves the finite Galerkin
problem on these sets, yielding a sequence of solutions $\bs u^{(0)}
\rightarrow\bs u^{(1)}\rightarrow\cdots$, following the paradigm
solve $\rightarrow$ estimate $\rightarrow$ mark $\rightarrow$ refine.
The latter one is referred to as an \emph{adaptive
wavelet Galerkin method (AWGM)}, which is the focus of this paper.

There are three basic routines necessary for an efficient realization of an
AWGM: (1) approximate residual evaluation (Estimate),
(2) approximate Galerkin solver\\
(solve) and (3) bulk chasing (mark and refine).
We do not discuss these routines in detail here, but refer to the literature. In order to control the number of active variables (number of selected wavelets), one often uses a coarsening step in order to remover `unnecessary' coefficients. This is done by a routine called \textbf{COARSE}, which we detail for later use:  
For a given finitely supported $\bs v$ such routine is assumed to produce an approximation $\bs v_\varepsilon$ such that
\begin{align*}
    \#\supp(\bs v_\varepsilon)\lesssim\min\left\{N:\|\bs v-\bs w\|_{\ell_2}
    \leq\varepsilon,\;\bs w\in\ell_2(\J),\;\#\supp(\bs w)\leq N\right\}.
\end{align*}
A straightforward realization would involve sorting -- with log linear
complexity. To achieve linear complexity,
exact sorting can be replaced by an approximate bin sorting which
satisfies the above estimate. Again, we refer to the literature.

Note that both methods require that $A$, or, equivalently, $\bs A$ is
symmetric positive definite. Otherwise a similar analysis applies to
the normal equations with $\bs A^T\bs A$. However, the additional application
of $\bs A^T$ hampers numerical performance and convergence estimates depend
on $\kappa(\bs A)^2$ rather than on $\kappa(\bs A)$. The penalty for applying
$\bs A^T$ is even more severe in the high-dimensional case
due to the increase in ranks.

\subsection{Tensor Formats}\label{sec:tensors}
We briefly review some of the basics of tensor formats, see e.g.\ \cite{Hackbusch}. 
In this paper, we view \emph{tensors} as algebraical or topological objects rather than tensor fields as geometrical objects\footnote{By the \emph{universality property} an equivalence between
the two concepts can be established, \cite{Kreyszig}.}.
A \emph{tensor of order $d$} is an element of a tensor space
$\mc V:=\otimes_{j=1}^dV_j$, where $V_j$ are some vector spaces. We consider
\emph{topological} tensor spaces, i.e., $\mc V$ is Banach space with some norm
$\|\cdot\|_{\mc V}$. Typically, $V_j$ are themselves Banach spaces and the
norm on $\mc V$ is induced by the norms on $V_j$. The \emph{tensor product}
$\otimes:V_1\times\cdots\times V_d\rightarrow V_1\otimes\cdots\otimes V_d$
is the unique multilinear mapping factoring any other multilinear mapping
$\varphi:V_1\times\cdots\times V_d\rightarrow W$ into a linear mapping
$f:V_1\otimes\cdots\otimes V_d\rightarrow W$ such that $\varphi=f\circ\otimes$,
where $V_j$ and $W$ are some vector spaces.
If the tensor product $\otimes:\ds{\bigtimes_{j=1}^d}(V_j,\|\cdot\|_{V_j})
\rightarrow(\mc V,\|\cdot\|_{\mc V})$
is continuous, any element $u\in\mc V$ can be written as
\begin{align}\label{eq:cp}
    u=\sum_{k=1}^r\bigotimes_{j=1}^dv_j^k,
\end{align}
with $r\leq\infty$. The representation in \cref{eq:cp} is referred to as the
\emph{$r$-term representation} or \emph{CP format} (canonical polyadic
decomposition). The smallest possible $r$ in this representation is called the
\emph{tensor rank} and we will denote it by
$$
r(u)\in\N_0\cup\{\infty\}, 
$$
whenever it is clear that $u$ is to be interpreted in the $r$-term format.
Though the representation \cref{eq:cp} would be a cheap way to
store $u$, the approximation problem in the said format is ill posed, the
reason being already apparent from \cref{eq:cp}, namely possible cancellations.
A format which is better suited for approximation is the \emph{Tucker format}
\begin{align*}
    u=\sum_{i_1=1}^{r_1}\ldots\sum_{i_d=1}^{r_d}a_{i_1,\ldots,i_d}
    \bigotimes_{j=1}^dU_j^{i_j}=:Ua,
\end{align*}
with
\begin{align*}
    U:=\bigotimes_{j=1}^dU_j,\quad
    &U_j:=[U_j^1,\ldots,U_j^{r_j}],\quad
    &a:=[a_{i_1,\ldots,i_d}],\, 1\leq i_j\leq r_j,\;1\leq j\leq d,
\end{align*}
where the $U_j$'s are referred to as \emph{frames} and $a$ as \emph{core tensor}.
One can apply techniques from (multi)linear algebra in combination with
matricizations to build a well conditioned, even orthonormal basis $U$.
Unfortunately, the storage cost of the core tensor $a$ grows exponentially in
$d$.

The \emph{hierarchical Tucker (HT) format} combines both the advantages of
stable approximation of the Tucker format with the sparse representation of
the $r$-term format by further decomposing the core tensor. For a general
multi-index $\alpha\subset\{1,\ldots, d\}$, we can define the tensor product
vector space
\begin{align*}
    V_\alpha:=\bigotimes_{j\in\alpha}V_j.
\end{align*}
The idea behind
HT can be illustrated by the following simple observation: An element
$u\in\mc V$ can be also seen as an element of $u\in V_{\alpha}\otimes
V_{\bar\alpha}$ with $\alpha,\bar\alpha\subset\{1,\ldots,d\}$ with $\bar\alpha$ being
the complement of $\alpha$. Note, that the rank $r(u)$ may change
if we reinterpret $u$.
Applying this idea recursively, we start with a Tucker decomposition of
$u\in V_\alpha\otimes V_{\bar\alpha}$. We then further decompose the bases
$U_\alpha$ and $U_{\bar\alpha}$ of $V_{\alpha}$ and $V_{\bar\alpha}$ respectively,
until we reach the singeltons $\alpha=\{j\}$.
We denote the ranks of this hierarchical representation by
$r(u)=(r(u)_\alpha)_{\alpha\in T}$ with the max norm
$|r(u)|_\infty$ defined in an obvious way, where $T$ is the HT tree
structure. In contrast to the Tucker format,
which requires the storage of an order $d$ tensor, the HT format stores
several order 3 tensors\footnote{Due to the binary decomposition
$\alpha=\alpha_L\cup\alpha_R$,
each transfer tensor has 2 indices related to the child nodes
$\alpha_L$, $\alpha_R$ and one index related to the parent node $\alpha$.}.
However, note that in the worst case $r(u)$ can still behave exponentially w.r.t.\ $d$.
Nonetheless, it is known that the asymptotic behavior of the storage requirements of HT are
not worse than that of the $r$-term format and the performance of HT in
practice has proven its merit. A rigorous answer to the question as to when
and why functions exhibit good approximation properties in tensor tree formats
remains a challenging and interesting problem.

As in the case for best $N$-term approximations in \cref{eq:bestN}, we require
a benchmark to assess the quality of the ranks of approximation. For
this purpose we use the benchmark introduced in \cite{DB}, similar to
\cref{eq:bestN}. We use the notation $u\in\mc H_N$ to denote that
$u$ is representable in an HT format with $|r(u)|_\infty\leq N$.
Given a positive, strictly increasing growth sequence,
$\gamma:=(\gamma(n))_{n\in\N_0}$ with $\gamma(0)=1$, define an approximation class
as
\begin{align*}
    \mc A(\gamma):=\left\{v\in\mc V:|v|_{\mc A(\gamma)}
    :=\sup_{N\in\N_0}
    \gamma(N)\inf_{w\in\mc H_N}\|v-w\|_{\mc V}<\infty\right\},
\end{align*}
with norm $\|v\|_{\mc A(\gamma)}=\|v\|_{\mc V}+|v|_{\mc A(\gamma)}$.
It is known from, e.g., \cite{SchneiderUsch} that the best approximation error for
a function with Sobolev
smoothness $s$ behaves in the worst case like
\begin{align*}
    \max_{\alpha\in T\setminus\{1,\ldots,d\}}r_\alpha^{-s
    \max\{1/|\alpha|, 1/(d-|\alpha|)\}}.
\end{align*}

One of the most important operations on tensors is truncation. It lies in the
heart of all iterative tensor algorithms that rely on truncation to keep ranks
low. For a given algebraic tensor $u\in\mc V$, we seek an approximation $v\in\mc V$
with $r(v)_\alpha\leq r_\alpha\leq r(u)_\alpha$ for some fixed $r_\alpha$
and all $\alpha\subset\{1,\ldots,d\}$.
In practice, this can be done by applying singular value decompositions (SVD) to
matricizations $\mc M_\alpha(u)\in V_\alpha\otimes V_{\bar\alpha}$, a method referred to as
\emph{higher order singular value decomposition} (HOSVD). Unlike the standard SVD, the HOSVD
provides one only with a quasi-best approximation in the sense
\begin{align}\label{eq:hosvd}
    \|u-v_{\text{HOSVD}}\|_{\mc V}\leq\sqrt{\sum_\alpha\sum_{i\geq r_\alpha+1}
    (\sigma_i^\alpha)^2}\leq\sqrt{2d-3}\inf_{\substack{v\in\mc V,\\
    r(v)\leq r}}\|u-v\|_{\mc V},
\end{align}
where $r=(r_\alpha)_\alpha$ is some integer vector and $\sigma_i^\alpha$ are
the corresponding singular values of the $\alpha$ matricization. We will
denote the (nonlinear) operator that produces an HOSVD of $u$ by
$\mc T(u, \varepsilon)$, i.e.,
\begin{align*}
    \|u-\mc T(u, \varepsilon)\|_{\mc V}\leq\varepsilon.
\end{align*}
The total computational work for truncating a tensor $u$ can be bounded by a
constant multiple of
$dr^4+r^2\sum_{j=1}^dn_j$,
where $r=|r(u)|_\infty$ and $n_j:=\dim(V_j)$.

We need to combine the wavelet coarsening with the tensor rank tuncation.
Recall that to apply $\textbf{COARSE}$ to a tensor $u\in\mc V$ of finite
support in the wavelet dictionary, we would have to search through all entries
of $u$, a process that scales exponentially in $d$. Thus, we require low
dimensional quantities that allow us to perform this task. For this purpose we
use \emph{contractions}\footnote{We remark that this is
a slight abuse of terminology for general tensor contractions.} introduced in
\cite{DB}. For a tensor $\bs u\in\ell_2(\J^d)$ where $\J$ is a 1D wavelet
index set, we set 
\begin{align}\label{eq:contract}
    \pi_j(\bs u)=(\pi_j(\bs u)[\lambda_j])_{\lambda_j\in\J}:=\left(\sqrt{
    \sum_{\lambda_1,\ldots,\lambda_{j-1},
    \lambda_{j+1},\ldots,\lambda_d\in\mc J^{d-1}}|\bs u_{\lambda_1,\ldots,\lambda_j,\ldots,
    \lambda_d}|^2}\right)_{\lambda_j\in\J}.
\end{align}
Recalling the restriction operator
\begin{align*}
    R_{\J_1\times\ldots\times\J_d}\bs u[\lambda]:=
    \begin{cases}
        \bs u[\lambda],&\quad\text{if }\lambda\in\J_1\times\ldots\times\J_d,\\
        0,&\quad\text{otherwise},
    \end{cases}
\end{align*}
the two important properties of these contractions are
\begin{align}\label{eq:contractqo}
    \pi_j(\bs u)[\lambda_j]&=
    \sqrt{\sum_k|\sigma^j_k|^2|\bs U_j^k(\lambda_j)|^2},\notag\\
    \|(I-R_{\J_1\times\ldots\times\J_d})\bs u\|&\leq
    \sqrt{\sum_{j=1}^d\sum_{\lambda\in\J\setminus \J_j}|\pi_j(\bs u)
    [\lambda]|^2},\notag\\
    &\leq\sqrt{d}
    \|(I-R_{\J_1\times\ldots\times\J_d})\bs u\|,
\end{align}
where $\bs U_j^k$ is the $k$-th column of the $j$-th HOSVD basis frame and
$\sigma^j_k$ are the corresponding singular values. We use the notation
\begin{align*}
    \supp_j(\bs u):=\supp(\pi_j(\bs u)),
\end{align*}
to refer to the 1D support of $\bs u$ along the $j$-th dimension, i.e.,
$\bs u$ can
be viewed as $\bs u\in\ell_2(\supp_1(\bs u)\times\cdots\times
\supp_d(\bs u))$.

\subsection{Separable Preconditioning}\label{sec:prec}

Suppose we want to solve an equation on the Sobolev space $\mc X\subset H^s(\Omega)$
on a bounded Lipschitz domain $\Omega\subset\R^d$
with appropriate boundary conditions. Typically, the point of
departure is a Riesz wavelet basis $\Psi_{L_2}$
for $L_2(\Omega)$ from which we obtain
a whole range of Riesz bases for $H^s$ by a simple diagonal scaling
(see e.g.\ \cite[Section 5.6.3]{KU}) $\Psi_{H^1}:=\bs D^{-s}\Psi_{L_2}$,
where $\bs D:=(\delta_{\lambda,\mu}\|\psi_{\lambda}\|_{H^1})_{\lambda, \mu}$. This is
equivalent to reformulating \cref{eq:opeqdisc} as the preconditioned
infinite system
\begin{align}
    \bs D^{-s}\bs A\bs D^{-s}\bs D\bs u=\bs D^{-s}\bs f.\label{eq:precop}
\end{align}

In the context of high dimensional problems, $d\gg 1$ is large and approximating
the solution to \cref{eq:opeqdisc} is in general an intractable problem
(see, e.g., \cite{Novak}). However, given
a product structure of the domain $\Omega=\times_{j=1}^d\Omega_j$ (or smooth
images thereof), the problem \cref{eq:opeqdisc} can be solved with tractable
(algebraic) methods (see, e.g., \cite{DDGS}).
For this we will need $\Psi$ to be a
tensorised basis of lower dimensional components, i.e.,
$\Psi:=\times_{j=1}^d\Psi_j$ and we reconsider $\mc X$ as a tensor space
$\mc X=\otimes_{j=1}^d\mc X_j$. This way, if $A$ permits a separable
structure or can be well approximated in such a form, than we can
discretize $A$ such that it
preserves the product structure with low dimensional components.

Unfortunately, the space $H^s(\Omega)$ is not equipped with a cross norm,
i.e., for an elementary tensor product $v=v_1\otimes\cdots\otimes v_d$
\begin{align*}
    \|v\|_s\neq \|v_1\|_s\cdots \|v_d\|_s.
\end{align*}
Considering again \cref{eq:precop}, this means that $\bs D^{-s}$ can not be
represented in a separable form. However, this issue was addressed in \cite{MarkusSobolev},
where the exact preconditioning $\bs D^{-s}$ was replaced by an approximate
separable scaling via exponential sum approximations. We will utilize this
separate scaling both for preconditioning the Galerkin solver and the approximate
residual evaluation. We briefly recall some basic properties of the said
preconditioning\footnote{For ease of presentation,
we restrict ourselves to $s=1$.}.

For certain parameters $\delta>0$, $\eta>0$, $T>1$,
we choose 
$h\in\left(0, \frac{\pi^2}{5(|\ln(\delta/2)|+4)}\right)$, 
$n^+\geq h^{-1}\max\left\{\frac{4}{\sqrt{\pi}},\sqrt{|\ln(\delta/2)|} \right\}$
and $n\geq h^{-1}\left(\ln\frac{2}{\sqrt{\pi}}+|\ln(\min\{\delta/2,\eta\})|+ \frac{1}{2}\ln T\right)$
The approximation involved is
\begin{align*}
    \frac{1}{\sqrt{t}}&=\frac{2}{\sqrt{\pi}}\int_\R
    \frac{e^{-t\ln^2(1+e^x)}}{1+e^{-x}}dx
    \approx\sum_{k=-n}^{n^+}hw(kh)e^{-\alpha(kh)t}=:\varphi_{n^+,n}(t),
\end{align*}
where $w(x):=\frac{2}{\sqrt{\pi}}(1+e^{-x})^{-1}$, $\alpha(x):=\ln^2(1+e^x)$ and 
$t>0$ is some scaling weight.
We get
\begin{align*}
    \left|\frac{1}{\sqrt{t}}-\varphi_{n^+,n}(t)
    \right|\leq\frac{\delta}{\sqrt{t}},
    &&
    |\varphi_{n^+,\infty}(t)-\varphi_{n^+,n}(t)| \leq\frac{\eta}{\sqrt{t}},
\end{align*}
for all $t\in[1, T]$. For the exact diagonal preconditioning, the scaling
weights for  tensor product wavelets can be obtained by observing
that $H^1$ (and similarly $H^s$) is isomorphic to the intersection of Hilbert spaces
\begin{align*}
    H^1(\Omega)\cong \bigcap_{j=1}^dL_2(\Omega_1)\otimes\cdots
    \otimes H^1(\Omega_j)\otimes\cdots\otimes L_2(\Omega_d),
    \qquad \text{with }
    \Omega=\Omega_1\times\cdots\times\Omega_d.
\end{align*}
The norm on the intersection
space leads to the scaling weight $t:= \sum_{j=1}^d\|\psi_{\lambda_j}\|^2_{H^1}$, 
for $\psi_\lambda=\otimes_{j=1}^d\psi_{\lambda_j}$. We will denote by
$$
\bs S(\delta,\eta)
\qquad\text{and}\qquad
\bs S(\delta) := \lim_{\eta\rightarrow0} \bs S(\delta,\eta)
$$
the corresponding separable approximation to $\bs D$ and the limit, respectively. 
We mention important properties from \cite{MarkusSobolev} for later use
\begin{subequations}
\begin{align}
    \|\bs D\bs S^{-1}(\delta,\eta)\|&\leq 1+\delta,\quad\forall\eta>0,\\
    \label{eq:ds}\|\bs D\bs S^{-1}(\delta)\|&\leq 1+\delta,\\
    \|\bs S(\delta)\bs D^{-1}\|&\leq\frac{1}{1-\delta},\\
    \|\bs D(\bs D^{-1}-\bs S^{-1}(\delta,\eta))R_{\J_T}
    \|&\leq\delta,\quad\forall\eta>0,\\
    \|\bs D(\bs S^{-1}(\delta)-\bs S^{-1}(\delta,\eta))R_{\J_T}\|
    &\leq\eta,\quad\forall\delta>0,\\
    \label{eq:ids}\|\bs S(\delta)(\bs S^{-1}(\delta)-\bs S^{-1}(\delta,\eta))R_{\J_T}\|
    &\leq\frac{\eta}{1-\delta},\\
    \bs S^{-1}(\delta,\eta)&\leq\bs S^{-1}(\delta),\quad\forall\eta>0,\\
    1-\delta&\leq \bs S^{-1}(\delta)\bs D\leq1+\delta,\\
    1-\delta&\leq \left(\bs S^{-1}(\delta,\eta)\bs D\right)_{
    \lambda\in\J_T}\leq1+\delta,\quad\forall\eta>0,
\end{align}
\end{subequations}
where the last three inequalities are to be understood componentwise
and $T$ has to be chosen large enough in dependence on
$\J_T$. We thus
seek to approximate the solution of the
separably\footnote{Though $\bs A^\delta$ is still not separable,
it can be well approximated by separable operators.}
preconditioned equation
\begin{align*}
    \bs S^{-1}(\delta)\bs A\bs S^{-1}(\delta)\bs S(\delta)
    \bs u=\bs A^\delta
    \bs u^\delta=\bs f^\delta=\bs S^{-1}(\delta)\bs f,
\end{align*}
with the shorthand notation
\begin{align*}
    \bs S^{-1}(\delta)\bs A\bs S^{-1}(\delta)=:\bs A^\delta,\quad
    \bs S^{-1}(\delta)\bs f=:\bs f^\delta,\quad
    \bs S(\delta)\bs u=:\bs u^\delta.
\end{align*}

\section{Perturbed finite-dimensional descent method}\label{sec:pdescent}

For further presentation we formulate a general descent method with
perturbations for solving the linear system
$Ax=b$, $A:\mc V\rightarrow\mc V$, $x,b\in\mc V$, 
where $A$ is an s.p.d.\ matrix and $\mc V$ is a finite-dimensional vector
space, possibly an algebraic tensor space with $N:=\dim(\mc V)<\infty$, i.e., $\mc V \cong\R^N$.

For simplicity of presentation, we omit preconditioning at this point. The
analysis for the case of exact preconditioning remains the same.
Approximate preconditioning adds a perturbation to the descent direction.

We will frequently use the associated quadratic functional
\begin{align*}
    f(x) := \frac{1}{2}\innerp{x, Ax}-\innerp{b, x}
    \equiv f_{A,b}(x), 
\end{align*}
where $\innerp{\cdot, \cdot}$ denotes the standard inner product on $\mc V$ with induced 
Euclidean norm $\|\cdot\|$ and $\|\cdot\|_A:=\innerp{\cdot, A\cdot}$ the energy norm. The very well-known descent method then reads as follows.

\begin{algorithm}
 \caption{Descent method for minimizing $f(x)$.}
\label{descalg}
\begin{algorithmic}[1]
    \REQUIRE $x^{(0)}\in\mc V$
    \STATE $k\leftarrow 0$
    \WHILE{stopping criterion for $f(x^{(k)})$ not satisfied}
        \STATE choose/update descent direction $d^{(k)}$ \label{desc:dk0}
        \STATE $d^{(k)}\leftarrow d^{(k)}+\eps^{(k)}$ (e.g., truncation) \label{desc:dk}
        \STATE compute step size $\alpha_k$ \label{desc:ak}
        \STATE $x^{(k+1)}\leftarrow x^{(k)}+\alpha_kd^{(k)}$
        \STATE $x^{(k+1)}\leftarrow x^{(k+1)}+\epss^{(k+1)}$ \label{desc:xk}
        \STATE $k\leftarrow k+1$
    \ENDWHILE
\end{algorithmic}
\end{algorithm}

In a tensor based solver, lines \ref{desc:dk} and \ref{desc:xk} are 
typical candidates for truncating a tensor due to the increase in ranks
after the summation. The quantities 
$\varepsilon^{(k)}_j$, $j=1,2$, represent the error incurred due to truncation, where $x^{(k)}$ is replaced by a truncated version
$\tilde{x}^{(k)}:=\trunc{x^{(k)}}{\varepsilon}$, such that $\|\epss^{(k)}\|\leq \varepsilon$ for the truncation error $\epss^{(k)}:=\tilde{x}^{(k)}-x^{(k)}$. We emphasize
that the analysis has to rely solely on the control of the
\emph{magnitude} of $\epss^{(k)}$ without restricting
the \emph{direction} of $\epss^{(k)}$,
which destroys optimality features of conjugate directions.

\subsection{Gradient descent}
Choosing $d^{(k)}=r^{(k)}+\epss^{(k)}$, with $r^{(k)}:=b-Ax^{(k)}$ being the residual,
and using the optimal step size $\alpha_k$ leads
to the well-known gradient-type descent method. The following proposition shows
that appropriately choosing $\eps^{(k)}$ and $\epss^{(k)}$ ensures
the same asymptotic convergence as the exact gradient descent method.

\begin{proposition}\label{Prop:rateg}
    For the choice $d^{(k)}=r^{(k)}$ in line \ref{desc:dk0} and
    \begin{align*}
        \alpha_k=\argmin_{\alpha\in\R}f(x^{(k)}+\alpha d^{(k)})
    \end{align*}
    in line \ref{desc:ak}
    (exact line search)
    of \cref{descalg}, we have the estimate for the error
    $e^{(k)}:=x^*-x^{(k)}$
    \begin{align}
        \|e^{(k)}\|_{A}&\leq\theta^{k}\|e^{(0)}\|_A+
        \sum_{j=0}^{k-1}\theta^{k-j-1}\left(\frac{\|\eps^{(j)}\|_A}{\lambda_{\min}}+
        \|\epss^{(j+1)}\|_A\right),\label{eq:gradient}
    \end{align}
    with reduction factor
     $\theta :=\frac{\lambda_{\max}-\lambda_{\min}}{\lambda_{\max}+\lambda_{\min}}$,
    and $\lambda_{\max}$ and $\lambda_{\min}$ being the largest and smallest
    eigenvalues of $A$, respectively.
\end{proposition}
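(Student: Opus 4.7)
The plan is to prove the desired bound by first establishing a one--step recursion of the form
$$\|e^{(k+1)}\|_A \leq \theta\,\|e^{(k)}\|_A+\frac{\|\eps^{(k)}\|_A}{\lambda_{\min}}+\|\epss^{(k+1)}\|_A,$$
and then unrolling it with respect to $k$. The telescoping is the easy part; the real content lies in a single step.

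For the one--step bound, I would first introduce the intermediate iterate $\tilde x^{(k+1)}:=x^{(k)}+\alpha_k d^{(k)}$, so that $x^{(k+1)}=\tilde x^{(k+1)}+\epss^{(k+1)}$ and by the triangle inequality
$$\|e^{(k+1)}\|_A\leq \|x^*-\tilde x^{(k+1)}\|_A+\|\epss^{(k+1)}\|_A.$$
Next, I would exploit the defining property of exact line search: since $\alpha_k=\argmin_\alpha f(x^{(k)}+\alpha d^{(k)})$ and minimising $f$ along a line is equivalent to minimising the $A$--distance to $x^*$ along that line, for any scalar $\beta$
$$\|x^*-\tilde x^{(k+1)}\|_A\leq \|x^*-(x^{(k)}+\beta d^{(k)})\|_A.$$
The natural choice is $\beta=\alpha_k^\star:=\langle r^{(k)},r^{(k)}\rangle/\langle r^{(k)},Ar^{(k)}\rangle$, i.e.\ the exact gradient--descent step size one would use if the direction were $r^{(k)}$ rather than the perturbed $d^{(k)}=r^{(k)}+\eps^{(k)}$.

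With this choice I split
$$x^*-(x^{(k)}+\alpha_k^\star d^{(k)})=\bigl(x^*-(x^{(k)}+\alpha_k^\star r^{(k)})\bigr)-\alpha_k^\star\,\eps^{(k)}.$$
The first summand is exactly the error of one step of \emph{unperturbed} gradient descent with exact line search, and the classical Kantorovich inequality gives
$$\|x^*-(x^{(k)}+\alpha_k^\star r^{(k)})\|_A\leq \theta\,\|e^{(k)}\|_A.$$
For the second summand, I just bound $\alpha_k^\star\leq 1/\lambda_{\min}$, which follows immediately from $\langle r^{(k)},A r^{(k)}\rangle\geq \lambda_{\min}\langle r^{(k)},r^{(k)}\rangle$. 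Combining these yields the one--step recursion, and iterating from $k$ down to $0$ gives \cref{eq:gradient}.

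The main obstacle I anticipate is avoiding the naive attempt to compare the actual step size $\alpha_k$ (obtained from line search along the \emph{perturbed} direction $d^{(k)}$) with the idealised step size $\alpha_k^\star$ directly, which would introduce second--order cross terms in $\eps^{(k)}$ that are hard to absorb cleanly. The trick above side--steps this entirely: instead of reasoning about $\alpha_k$, one only uses its optimality, so that the analysis reduces to bounding the energy--norm error of a hypothetical iterate that uses the unperturbed step size along the perturbed direction, for which only the first--order contribution $\alpha_k^\star\eps^{(k)}$ appears. The factor $1/\lambda_{\min}$ in the statement is the unavoidable trace of this argument.
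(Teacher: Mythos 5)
Your proof is correct, and it fills in precisely the content that the paper leaves implicit behind the phrase ``using standard arguments.'' The key move you make --- exploiting only the \emph{optimality} of $\alpha_k$ (that $f(x^{(k)}+\alpha_k d^{(k)})\le f(x^{(k)}+\beta d^{(k)})$ for all $\beta$, which via $f(x)=\tfrac12\|x-x^*\|_A^2+\text{const}$ becomes a comparison of $A$-norm errors) rather than its explicit formula --- is exactly the right way to make the one-step reduction rigorous. It lets you compare against the hypothetical iterate $x^{(k)}+\alpha_k^\star d^{(k)}$, apply the classical Kantorovich bound to $(I-\alpha_k^\star A)e^{(k)}$, and bound $\alpha_k^\star\le 1/\lambda_{\min}$, all without ever having to analyze $(I-\alpha_k A)e^{(k)}$ directly. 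The paper, by contrast, writes the recursion $e^{(k+1)}=(I-\alpha_k A)e^{(k)}-\alpha_k\eps^{(k)}-\epss^{(k+1)}$ with the actual perturbed step size $\alpha_k$ (and moreover states the formula $\alpha_k=\langle d^{(k)},d^{(k)}\rangle/\langle d^{(k)},Ad^{(k)}\rangle$, which is not the exact line-search minimizer when $\langle d^{(k)},\eps^{(k)}\rangle\ne0$; the minimizer is $\langle r^{(k)},d^{(k)}\rangle/\langle d^{(k)},Ad^{(k)}\rangle$). As you anticipated, the Kantorovich contraction does not apply verbatim to $(I-\alpha_k A)e^{(k)}$ for that $\alpha_k$, and one must either absorb second-order cross terms in $\eps^{(k)}$ or --- as you do --- side-step the issue via the optimality argument. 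So while the underlying ingredients (Kantorovich inequality, eigenvalue lower bound on $\langle d,Ad\rangle$, telescoping) are the same, your write-up is cleaner and in fact more careful than the paper's own sketch.
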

\begin{proof}
    It holds that the iterate $x^{(k+1)}$ can be written as 
    $x^{(k+1)} = x^{(k)}+\alpha_k r^{(k)}+\alpha_k\eps^{(k)}+\epss^{(k+1)}$
    and the error reads 
    $e^{(k+1)}=(I-\alpha_k A)e^{(k)}+\alpha_k\eps^{(k)}+\epss^{(k+1)}$.
    The optimal step size is known to be
    $\alpha_k=\frac{\innerp{d^{(k)}, d^{(k)}}}{\innerp{d^{(k)}, Ad^{(k)}}}$. 
    Let $\{\lambda_j\}_{j=1,\ldots, N}$ denote the eigenvalues of $A$ and
    $\{\psi_j\}_{j=1,\ldots,N}$ the corresponding orthonormal basis of eigenvectors.
    Since $A$ is s.p.d., we get the standard estimate
    ($c_j:=\innerp{d^{(k)}, \psi_j}$)
    \begin{align}
        \innerp{d^{(k)}, Ad^{(k)}}=\innerp{\sum_{j=1}^N c_j\psi_j,
        \sum_{j=0}^N c_j\lambda_j\psi_j}=\sum_{j=1}^N
        \lambda_jc_j^2
        \geq
        \lambda_{\min}\|d^{(k)}\|^2.
        \label{eq:step}
    \end{align}
    Using standard arguments for the analysis of the gradient descent method
    (cf.\ \cite[Thm.\ 9.2.3]{Hackbusch93}), we get
     $\|e^{(k+1)}\|_A\leq \theta\|e^{(k)}\|_A+\frac{\|\eps^{(k)}\|_A}{\lambda_{\min}}
     +\|\epss^{(k+1)}\|_A$,
    which proves \cref{eq:gradient}.
\end{proof}


\subsection{Conjugate gradient descent}\label{sec:cg}
The (rank-)truncated (P)CG method was first proposed by C.\ Tobler in
\cite[Algorithm 9]{ToblerPhd} with promising
numerical results.

\begin{algorithm}
 \caption{Truncated (P)CG method}
\label{pcgalg}
\begin{algorithmic}[1]
    \REQUIRE $x^{(0)}\in\mc V$
    \STATE $r^{(0)}\leftarrow b-Ax^{(0)}$, $d^{(0)} \leftarrow r^{(0)}+\eps^{(0)}$
    \STATE $k\leftarrow 0$
    \WHILE{stopping criterion for $f(x^{(k)})$ not satisfied}
        \STATE $\alpha_k \leftarrow \frac{\innerp{r^{(k)}, d^{(k)}}}{\innerp{d^{(k)}, Ad^{(k)}}}$, \label{pcg:ak}
    	\STATE $x^{(k+1)} \leftarrow x^{(k)}+\alpha_k d^{(k)}+\epss^{(k+1)}$ \label{pcg:xk}
	\STATE  $\beta_k\leftarrow -\frac{\innerp{r^{(k+1)}, Ad^{(k)}}}{\innerp{d^{(k)}, Ad^{(k)}}}$
	\STATE $d^{(k+1)} \leftarrow r^{(k+1)} + \beta_k d^{(k)} + \eps^{(k+1)}$
	\STATE $k\leftarrow k+1$
    \ENDWHILE
\end{algorithmic}
\end{algorithm}

Obviously, the perturbed CG method does not preserve orthogonality of the search
directions w.r.t.\ $\innerp{\cdot,A\cdot}$
and the resulting algorithm is not a Krylov method (see also below). Nevertheless, we can guarantee the perturbed CG
to be a descent method which in turn will provide us with a convergence estimate. 

\begin{lemma}\label{lemma:angle}
    Let $\kappa:=\frac{\lambda_{\max}}{\lambda_{\min}}$ and fix some  
    $\tau\in(0,\frac{1}{\sqrt{1+\kappa^2}})$. Let
    $\delta_1, \delta_2>0$ and $\gamma>0$ be chosen such that
 	$\frac{3}{2}\delta_1+\delta_2\leq \frac{1}{\tau^2}-(1+\kappa^2)$ and $(1-\frac{\delta_1}{2})\tau\geq\gamma$.
    If the error sequence $\eps^{(k)}$ satisfies
    \begin{align}
        \|\eps^{(k)}\|\leq\min\left\{\frac{\delta_1}{2},
        \frac{\delta_2\|r^{(k)}\|}{2|\beta_{k-1}|\, \|d^{(k-1)}\|}
        \right\}\|r^{(k)}\|\label{eq:errseq},
    \end{align}
    then $d^{(k)}$ is a descent direction with 
    $\innerp{r^{(k)}, d^{(k)}}\geq\gamma\|r^{(k)}\|\|d^{(k)}\|$, 
    where the angle $\gamma$ does not depend on $k$. 
\end{lemma}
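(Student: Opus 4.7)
The plan is to proceed by direct calculation, not induction, since both the lower bound on $\innerp{r^{(k)}, d^{(k)}}$ and the upper bound on $\|d^{(k)}\|$ can be obtained from the single recursion $d^{(k)} = r^{(k)} + \beta_{k-1}d^{(k-1)} + \eps^{(k)}$ together with the optimality of $\alpha_{k-1}$. The key auxiliary observation is that the choice $\alpha_{k-1} = \frac{\innerp{r^{(k-1)}, d^{(k-1)}}}{\innerp{d^{(k-1)},Ad^{(k-1)}}}$ in line \ref{pcg:ak} forces $\innerp{r^{(k)}, d^{(k-1)}} = 0$ (computing the residual consistently with the recursion, so that $\epss^{(k)}$ does not enter). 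Writing out $\innerp{r^{(k)}, d^{(k)}}$ and using this orthogonality eliminates the potentially harmful $\beta_{k-1}$ cross term, leaving
\begin{align*}
    \innerp{r^{(k)}, d^{(k)}} = \|r^{(k)}\|^2 + \innerp{r^{(k)}, \eps^{(k)}}
    \geq \bigl(1-\tfrac{\delta_1}{2}\bigr)\|r^{(k)}\|^2,
\end{align*}
by Cauchy--Schwarz and the first branch of the min in \cref{eq:errseq}.

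For the upper bound on $\|d^{(k)}\|^2$, the same orthogonality kills the $2\beta_{k-1}\innerp{r^{(k)}, d^{(k-1)}}$ cross term, so I am left with $\|r^{(k)}\|^2 + \beta_{k-1}^2\|d^{(k-1)}\|^2 + \|\eps^{(k)}\|^2 + 2\innerp{r^{(k)},\eps^{(k)}} + 2\beta_{k-1}\innerp{d^{(k-1)},\eps^{(k)}}$. The main obstacle here is controlling $|\beta_{k-1}|\,\|d^{(k-1)}\|$, since nothing a priori says it is small. Plugging in the definition of $\beta_{k-1}$ and using Cauchy--Schwarz in the $A$-inner product together with the spectral bounds $\lambda_{\min}\|d^{(k-1)}\|^2 \le \innerp{d^{(k-1)},Ad^{(k-1)}}$ and $\|Ad^{(k-1)}\| \le \lambda_{\max}\|d^{(k-1)}\|$ yields the clean estimate $|\beta_{k-1}|\,\|d^{(k-1)}\| \le \kappa\,\|r^{(k)}\|$, which is exactly why the factor $1+\kappa^2$ appears in the hypothesis.

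The two branches of the min in \cref{eq:errseq} are then used separately to bound the remaining error terms: the $\delta_1/2$ branch handles $2|\innerp{r^{(k)},\eps^{(k)}}| \le \delta_1\|r^{(k)}\|^2$ and $\|\eps^{(k)}\|^2 \le \frac{\delta_1}{2}\|r^{(k)}\|^2$ (absorbing the square), while the $\delta_2$ branch is tailored precisely so that $2|\beta_{k-1}|\,\|d^{(k-1)}\|\,\|\eps^{(k)}\| \le \delta_2\|r^{(k)}\|^2$. Adding these up gives $\|d^{(k)}\|^2 \le \bigl(1+\kappa^2 + \tfrac{3}{2}\delta_1 + \delta_2\bigr)\|r^{(k)}\|^2 \le \tau^{-2}\|r^{(k)}\|^2$ by the first inequality on $\delta_1,\delta_2$, i.e., $\tau\|d^{(k)}\| \le \|r^{(k)}\|$.

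Finally I combine the two bounds:
\begin{align*}
    \innerp{r^{(k)}, d^{(k)}} \ge \bigl(1-\tfrac{\delta_1}{2}\bigr)\|r^{(k)}\|^2
    \ge \bigl(1-\tfrac{\delta_1}{2}\bigr)\tau\,\|r^{(k)}\|\,\|d^{(k)}\|
    \ge \gamma\,\|r^{(k)}\|\,\|d^{(k)}\|,
\end{align*}
where the last step is the second hypothesis $(1-\delta_1/2)\tau \ge \gamma$. The base case $k=0$ is the same argument with the $\beta_{k-1}d^{(k-1)}$ term absent, which only makes both bounds tighter, so no separate treatment is needed beyond noting that the second branch of the min in \cref{eq:errseq} is vacuous there. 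The bound $\tau < 1/\sqrt{1+\kappa^2}$ in the hypothesis is exactly the condition that the admissible range for $\delta_1,\delta_2$ is non-empty.
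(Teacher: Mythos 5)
Your proof is correct and follows essentially the same route as the paper: exploit $\innerp{r^{(k)},d^{(k-1)}}=0$ from the optimal step size, bound $|\beta_{k-1}|\|d^{(k-1)}\|\leq\kappa\|r^{(k)}\|$ via the spectral bounds, expand $\|d^{(k)}\|^2$ to get $\|d^{(k)}\|\le\tau^{-1}\|r^{(k)}\|$, and then combine with the direct lower bound $\innerp{r^{(k)},d^{(k)}}\geq(1-\delta_1/2)\|r^{(k)}\|^2$. Your parenthetical about computing $r^{(k)}$ via the recursion so that $\epss^{(k)}$ does not pollute the orthogonality identity, and your explicit remark on the $k=0$ base case, are points the paper leaves implicit.
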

\begin{proof}
    First we show that $\|r^{(k)}\|\geq\tau\|d^{(k)}\|$. To this end, note that
    \begin{align}
        \|d^{(k)}\|^2=\innerp{d^{(k)}, d^{(k)}}
                 &=\|r^{(k)}\|^2
                 	+ \beta_{k-1}^2\|d^{(k-1)}\|^2+\|\eps^{(k)}\|^2
        			+ 2\beta_{k-1}\innerp{r^{(k)}, d^{(k-1)}}
			\nonumber \\
        		&\quad+2\innerp{r^{(k)}, \eps^{(k)}}
			+ 2\beta_{k-1}\innerp{d^{(k-1)}, \eps^{(k)}}
		\label{eq:nrmdk}.
    \end{align}
    Next, we get
    \begin{align*}
        \innerp{r^{(k)}, d^{(k-1)}}
        &=\innerp{b-A(x^{(k-1)}+\alpha_{k-1}d^{(k-1)}), d^{(k-1)}}\\
        &= \innerp{r^{(k-1)}, d^{(k-1)}}-\frac{\innerp{r^{(k-1)}, d^{(k-1)}}}
        {\innerp{d^{(k-1)}, Ad^{(k-1)}}}\innerp{Ad^{(k-1)}, d^{(k-1)}}=0.
    \end{align*}
    For the term $\beta_{k-1}^2\|d^{(k-1)}\|^2$ we get
    \begin{align*}
        \beta_{k-1}^2\|d^{(k-1)}\|^2&=
        \frac{|\innerp{r^{(k)}, Ad^{(k-1)}}|^2}
        {|\innerp{d^{(k-1)}, Ad^{(k-1)}}|^2}\|d^{(k-1)}\|^2
        \overset{\cref{eq:step}}{\leq} \frac{|\innerp{r^{(k)}, Ad^{(k-1)}}|^2}
        {\lambda_{\min}^2|\innerp{d^{(k-1)}, d^{(k-1)}}|^2}\|d^{(k-1)}\|^2\\
        &\leq\frac{\lambda_{\max}^2\|r^{(k)}\|^2\|d^{(k-1)}\|^2}
        {\lambda_{\min}^2\|d^{(k-1)}\|^4}\|d^{(k-1)}\|^2\leq\kappa^2\|r^{(k)}\|^2.
    \end{align*}
    Using \cref{eq:errseq}, we estimate the term \cref{eq:nrmdk} as 
    $\|d^{(k)}\|^2\leq (1+\kappa^2+\frac{\delta_1}{2}+\delta_1+\delta_2) \|r^{(k)}\|^2
        \leq\frac{1}{\tau^2}\|r^{(k)}\|^2$. 
    This finally gives us the desired claim
    \begin{align*}
        \innerp{r^{(k)}, d^{(k)}}&=\innerp{r^{(k)}, r^{(k)}}+\innerp{r^{(k)}, \eps^{(k)}}
        \geq\innerp{r^{(k)}, r^{(k)}}-\|r^{(k)}\|\|\eps^{(k)}\|\\
        &=\|r^{(k)}\|(\|r^{(k)}\|-\|\eps^{(k)}\|)\geq\|r^{(k)}\|^2(1-\frac{\delta_1}{2})
        \geq\gamma\|r^{(k)}\|\|d^{(k)}\|.
    \end{align*}
\end{proof}

With this preparation at hand we get the following convergence estimate.

\begin{theorem}\label{thm:cg}
    Let the assumptions of \cref{lemma:angle} hold.
    For the truncation tolerance $\varepsilon_2$ set
    \begin{align}\label{eq:trunctol}
        \|\varepsilon_2^{k+1}\|\leq\theta\mu(\lambda_{\max})^{-1}\|r^{(k)}\|,
    \end{align}
    with
    \begin{align}
        \theta:=\sqrt{1-\frac{\gamma^2}{2\kappa}},
        \quad
        \gamma<1, \quad\kappa>1,\quad
        \mu<\theta^{-1}-1\label{eq:ratecg}.
    \end{align}
    Then, we have
    \begin{align}
        \|e^{(k)}\|_A\leq [\theta(1+\mu)]^k\|e^{(0)}\|_A,\label{eq:errcg}
    \end{align}
    with the error reduction factor
    \begin{align*}
        \varrho:=\theta(1+\mu)<1.
    \end{align*}
\end{theorem}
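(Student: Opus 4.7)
The plan is to bound $\|e^{(k+1)}\|_A$ by splitting the iterate into an exact line--search update and a truncation perturbation, then contracting the clean part via the angle condition from \cref{lemma:angle} and absorbing the truncation through the choice of tolerance \cref{eq:trunctol}. Writing $x^{(k+1)}=x^{(k)}+\alpha_k d^{(k)}+\epss^{(k+1)}$ and $e^{(k)}:=x^*-x^{(k)}$, the error obeys $e^{(k+1)}=(e^{(k)}-\alpha_k d^{(k)})-\epss^{(k+1)}$, so the triangle inequality in the energy norm gives
\[
\|e^{(k+1)}\|_A \leq \|e^{(k)}-\alpha_k d^{(k)}\|_A + \|\epss^{(k+1)}\|_A.
\]
The first summand will contribute the factor $\theta$ and the second the factor $\theta\mu$.

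For the clean step, note that $\alpha_k = \innerp{r^{(k)}, d^{(k)}}/\innerp{d^{(k)}, A d^{(k)}}$ is the exact one--dimensional minimizer of $f(x^{(k)}+\alpha d^{(k)})$, which yields the standard identity
\[
\|e^{(k)}-\alpha_k d^{(k)}\|_A^2 = \|e^{(k)}\|_A^2 - \frac{\innerp{r^{(k)}, d^{(k)}}^2}{\innerp{d^{(k)}, A d^{(k)}}}.
\]
Combining the angle estimate $\innerp{r^{(k)}, d^{(k)}}\geq \gamma \|r^{(k)}\|\|d^{(k)}\|$ from \cref{lemma:angle} with the spectral bounds $\innerp{d^{(k)}, A d^{(k)}}\leq \lambda_{\max}\|d^{(k)}\|^2$ and $\|e^{(k)}\|_A^2=\innerp{A^{-1}r^{(k)}, r^{(k)}}\leq \|r^{(k)}\|^2/\lambda_{\min}$, one obtains $\|e^{(k)}-\alpha_k d^{(k)}\|_A^2 \leq (1-\gamma^2/\kappa)\|e^{(k)}\|_A^2 \leq \theta^2\|e^{(k)}\|_A^2$. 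The factor of two inside $\theta=\sqrt{1-\gamma^2/(2\kappa)}$ is deliberate slack chosen so that a positive $\mu$ satisfying $\theta(1+\mu)<1$ can be accommodated.

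For the truncation term the $\ell_2$ tolerance \cref{eq:trunctol} has to be transported into the $A$--norm. Diagonalising $A$ gives both $\|\cdot\|_A \leq \sqrt{\lambda_{\max}}\|\cdot\|$ and $\|r^{(k)}\| = \|A e^{(k)}\| \leq \sqrt{\lambda_{\max}}\|e^{(k)}\|_A$, so that
\[
\|\epss^{(k+1)}\|_A \leq \sqrt{\lambda_{\max}}\|\epss^{(k+1)}\| \leq \sqrt{\lambda_{\max}}\cdot\frac{\theta\mu}{\lambda_{\max}}\|r^{(k)}\| \leq \theta\mu\|e^{(k)}\|_A.
\]
Adding the two bounds yields $\|e^{(k+1)}\|_A\leq \theta(1+\mu)\|e^{(k)}\|_A$, and a straightforward induction in $k$ gives \cref{eq:errcg}; the hypothesis $\mu<\theta^{-1}-1$ then guarantees the contraction $\varrho=\theta(1+\mu)<1$.

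The main obstacle is conceptual rather than computational. Because $d^{(k)}$ is perturbed by $\eps^{(k)}$, the $A$--orthogonality of successive search directions is destroyed and the classical Krylov--based CG optimality proof is unavailable. One therefore has to fall back on the generic descent framework, which only works once a uniform angle between $r^{(k)}$ and $d^{(k)}$ has been secured; that is precisely the role of \cref{lemma:angle} and the reason its hypotheses couple the perturbation magnitude to the residual. The remaining care lies in matching the power of $\lambda_{\max}$ in \cref{eq:trunctol} to the embeddings $\|\cdot\|_A\leftrightarrow\|\cdot\|$ and $\|r^{(k)}\|\leftrightarrow\|e^{(k)}\|_A$, so that exactly one factor $\theta\mu$ survives in the perturbation term.
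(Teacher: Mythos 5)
Your proof is correct and follows the same overall structure as the paper's: split $e^{(k+1)}=(e^{(k)}-\alpha_kd^{(k)})-\epss^{(k+1)}$ by the triangle inequality in the $A$-norm, contract the clean (un-truncated) line-search step via the angle bound of \cref{lemma:angle}, and absorb the truncation through \cref{eq:trunctol} using the embeddings $\|\cdot\|_A\le\sqrt{\lambda_{\max}}\|\cdot\|$ and $\|r^{(k)}\|\le\sqrt{\lambda_{\max}}\|e^{(k)}\|_A$. The one substantive difference lies in how the factor $\theta$ is produced. The paper invokes the generic descent estimate $f(x^{(k+1)})\le f(x^{(k)})-\frac{\gamma^2}{4\lambda_{\max}}\|r^{(k)}\|^2$ from the cited reference and then passes through the eigenbasis, whereas you use the exact one-dimensional minimization identity
\begin{align*}
\|e^{(k)}-\alpha_kd^{(k)}\|_A^2=\|e^{(k)}\|_A^2-\frac{\innerp{r^{(k)},d^{(k)}}^2}{\innerp{d^{(k)},Ad^{(k)}}},
\end{align*}
combined with $\innerp{r^{(k)},d^{(k)}}\ge\gamma\|r^{(k)}\|\|d^{(k)}\|$, $\innerp{d^{(k)},Ad^{(k)}}\le\lambda_{\max}\|d^{(k)}\|^2$ and $\|r^{(k)}\|^2\ge\lambda_{\min}\|e^{(k)}\|_A^2$. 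This is more self-contained (no external lemma needed) and actually yields the sharper intermediate factor $1-\gamma^2/\kappa$, which you then relax to $\theta^2=1-\gamma^2/(2\kappa)$ to match the statement. One small remark: the ``factor of two as deliberate slack'' interpretation is not necessary --- even with the sharper contraction $\sqrt{1-\gamma^2/\kappa}<1$ a positive $\mu$ exists, since $\gamma<1$ and $\kappa>1$ already guarantee the factor is strictly below one; the $2$ in the paper is simply what the cited descent lemma produces. Both routes produce identical conclusions, and your version is arguably the cleaner one.
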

\begin{proof}
    Without loss of generality we can assume the solution is at the origin
    $x^*=0$ and thus $b=0$. Since  $d^{(k)}$
    is a descent direction by \cref{lemma:angle}, \cite[Lemma 6.2.2]{JR} yields
    $f(x^{(k+1)})\leq f(x^{(k)})-\frac{\gamma^2}{4\lambda_{\max}}\|r^{(k)}\|^2$. 
    Using an eigenbasis of $A$ as in \cref{eq:step}, we get
    \begin{align*}
        f(x^{(k)})=\frac{1}{2}\innerp{x^{(k)}, Ax^{(k)}}=\frac{1}{2}
        \sum_{j=1}^N\lambda_jc_j^2,
    &&
          \|r^{(k)}\|^2=\innerp{Ax^{(k)}, Ax^{(k)}}=\sum_{j=1}^N\lambda_j^2c_j^2.
    \end{align*}
    This gives
    \begin{align*}
        f(x^{(k+1)})\leq\frac{1}{2}\sum_{j=1}^N\lambda_jc_j^2(1-
        \frac{\gamma^2}{2\lambda_{\max}}\lambda_j)
        \leq (1-\frac{\gamma^2\lambda_{\min}}{2\lambda_{\max}})
        \frac{1}{2}\sum_{j=1}^N\lambda_jc_j^2=
        (1-\frac{\gamma^2}{2\kappa})f(x^{(k)}).
    \end{align*}
    The identity $2f(x)=\|x\|_A^2$ gives the desired claim for $\theta$ as in
    \cref{eq:ratecg}. Finally, we get with \cref{eq:trunctol}
    \begin{align*}
        \|e^{(k+1)}\|_A&\leq\theta\|e^{(k)}\|_A+\|\varepsilon_2^{(k+1)}\|_A,\\
        &\leq\theta\|e^{(k)}\|_A+\theta\mu\|e^{(k)}\|_A,\\
        &=\varrho\|e^{(k)}\|_A.
    \end{align*}
    This completes the proof.
\end{proof}

\begin{remark}\label{remark:tols}
    Note that the rate in \cref{eq:ratecg} is \emph{asymptotically} the same as in
    \cref{Prop:rateg} for large $\kappa$. This is
    not surprising, since we used the same approach for analyzing the convergence
    as in the gradient descent method. 

    Of course, \cref{eq:ratecg} is \emph{qualitatively} worse ,
    since it applies to a broader setting than
    the gradient descent method. 

    The preceding analysis is a worst case scenario that guarantees
    convergence of the method with a monotonic decrease of the error in
    the energy norm.
    However, numerically, the perturbed CG performs
    far better than the gradient descent method. This is due to the fact
    that the perturbed CG inherits some nice properties of its exact counterpart,
    as can be seen in the following lemma. Moreover, the analysis in
    \cref{thm:cg} is quite general,
    since we only require \emph{local optimality} (i.e., a descent direction)
    and the resulting bound in \cref{eq:errcg} is thus by no means optimal.

    Note, that according to \cref{eq:errcg}, the truncation tolerance
    $\|\epss^{(k+1)}\|$ should be set proportional to $\theta\|e^{(k)}\|_A$. However,
    since the error reduction factor $\theta$ corresponds to a worst case scenario, 
    this tolerance might be unnecessarily prohibitive 
    and significantly hamper quantitative
    performance. 

    A more detailed look on the estimates from
\cite[Lemma 6.2.2]{JR} reveals 
	$f(x^{(k+1)})\leq \frac{1}{2}\sum_{j=1}^N\lambda_jc_j^2-\alpha_k
        \innerp{r^{(k)}, d^{(k)}}$, 
    which suggests to choose an adaptive tolerance proportional to $\alpha_k\|d^{(k)}\|$. This is precisely the case for the adaptive tolerance
    strategy by C.\ Tobler in \cite[Algorithm 9]{ToblerPhd}.
    Hence, we use this in our subsequent numerical experiments.
\end{remark}

\begin{lemma}\label{lemma:polycg}
    For the perturbed CG method we have the following representations
    \begin{align*}
        r^{(k)}&=(I-A\,p^{(k)}(A))r^{(0)}
        	-A\left(\sum_{j=0}^{k-1}q_{k-j-1}^{(k)}(A)\eps^{(j)}
        +A\sum_{j=1}^{k}g^{(k)}_{k-j}(A)\epss^{(j)}\right),\\
        e^{(k)}&=(I-A\,p^{(k)}(A))e^{(0)}
        -\left(\sum_{j=0}^{k-1}q_{k-j-1}^{(k)}(A)\eps^{(j)}
        +\sum_{j=1}^{k}g_{k-j}^{(k)}(A)\epss^{(j)}\right),
    \end{align*}
    where $p^{(k)}\in\cP_{k-1}$, i.e., a polynomial of degree $k-1$,
    $g^{(k)}_j\in\cP_j$ with $g^{(k)}_j(0)=1$, $j=0,\ldots, k-1$, 
    and $q_j^{(k)}\in\cP_j$ such that
    $p^{(k)}(t)=\sum_{j=0}^{k-1}q_j^{(k)}(t)$.
\end{lemma}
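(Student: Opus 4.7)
The plan is to prove both formulas by simultaneous induction on $k$, carrying along an auxiliary polynomial representation for the search direction,
\begin{align*}
d^{(k)} = \tilde p^{(k)}(A) r^{(0)} + \sum_{j=0}^{k} \tilde q^{(k)}_{k-j}(A) \eps^{(j)} + \sum_{j=1}^{k} \tilde g^{(k)}_{k-j}(A) \epss^{(j)},
\end{align*}
with $\tilde p^{(k)} \in \cP_k$ and suitable degree bounds on the $\tilde q, \tilde g$. The residual representation then follows from the error representation by applying $A$ and using $r^{(\ell)} = A e^{(\ell)}$ together with $A e^{(0)} = r^{(0)}$.

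For the base case $k=0$, take $p^{(0)} \equiv 0$, all sums empty, and $d^{(0)} = r^{(0)} + \eps^{(0)}$ so that $\tilde p^{(0)} \equiv 1$ and $\tilde q^{(0)}_0 \equiv 1$. For the inductive step I would rewrite the recurrences of Algorithm~\ref{pcgalg} as
\begin{align*}
e^{(k+1)} &= e^{(k)} - \alpha_k d^{(k)} - \epss^{(k+1)}, \\
d^{(k+1)} &= A e^{(k+1)} + \beta_k d^{(k)} + \eps^{(k+1)},
\end{align*}
substitute the inductive representations, and read off the coefficient updates. These are of the form $p^{(k+1)} = p^{(k)} + \alpha_k \tilde p^{(k)}$, $q^{(k+1)}_{k-j} = q^{(k)}_{k-j-1} + \alpha_k \tilde q^{(k)}_{k-j}$, $g^{(k+1)}_{k-j+1} = g^{(k)}_{k-j} + \alpha_k \tilde g^{(k)}_{k-j}$, together with fresh entries $g^{(k+1)}_0 \equiv 1$ and the new $\eps^{(k+1)}$ slot, plus analogous recurrences for the tilded polynomials obtained from the second identity. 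The stated degree bounds propagate because the second recurrence introduces a single extra factor $A$, and the identity $p^{(k)}(t) = \sum_{j=0}^{k-1} q^{(k)}_j(t)$ is preserved since both sides absorb the same increment $\alpha_k \tilde p^{(k)}(t)$ at each step, reflecting the symmetry between propagation of the initial residual $r^{(0)}$ and of the search-direction perturbations $\eps^{(j)}$.

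The main obstacle is verifying the normalization $g^{(k)}_j(0) = 1$, which I would handle via the auxiliary invariant $\tilde g^{(k)}_j(0) = 0$ for every $j$. This invariant holds because in the recurrence $d^{(k+1)} = A e^{(k+1)} + \beta_k d^{(k)} + \eps^{(k+1)}$ each $\epss^{(j)}$ contribution enters either through the explicit factor $A$ in $A e^{(k+1)}$ or through a previous $\tilde g$-polynomial that is already $t$-divisible by induction, and the fresh perturbation $\eps^{(k+1)}$ is of the other type; hence every $\tilde g^{(k)}_j(t)$ factors as $t \cdot (\text{polynomial})$. Substituting this into $g^{(k+1)}_{j+1} = g^{(k)}_j + \alpha_k \tilde g^{(k)}_j$ and evaluating at $t=0$ then gives $g^{(k+1)}_{j+1}(0) = g^{(k)}_j(0) + \alpha_k \cdot 0 = 1$ by the inductive hypothesis, while the initial case $g^{(k+1)}_0 \equiv 1$ is read off directly from the additive perturbation $-\epss^{(k+1)}$ in the error update.
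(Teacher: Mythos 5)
Your proposal is correct and takes essentially the same approach as the paper's proof: induction on $k$, carrying along a polynomial representation for the search direction $d^{(k)}$ and reading off the coefficient recurrences. The paper tracks $x^{(k)}$ rather than $e^{(k)}$, and builds your invariant $\tilde g^{(k)}_j(0)=0$ directly into the notation by factoring an explicit $A$ out of the $\epss$-terms in the $d^{(k)}$ representation — so the normalization $g^{(k)}_j(0)=1$ is immediate rather than proved via a separate induction as you do — but these are equivalent bookkeeping choices, not different arguments.
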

\begin{proof} We prove the assertion by induction over $k$. For $k=1$, we have $x^{(1)}=x^{(0)}+\alpha_0(r^{(0)}+\eps^{(0)})+\epss^{(1)}=x^{(0)}+\alpha_0Ar^{(0)}+\alpha_0\eps^{(0)}+\epss^{(1)}$. As a consequence, $r^{(1)}=b-Ax^{(1)}=(I-\alpha_0A)r^{(0)}-\alpha_0A\eps^{(0)}-A\epss^{(1)}$ and 
        \begin{align*}
            d^{(1)}&=r^{(1)}+\beta_0d^{(0)}+\eps^{(1)}
            	=(I-\alpha_0A)r^{(0)}-\alpha_0A\eps^{(0)}-A\epss^{(1)}
            		+\beta_0(r^{(0)}+\eps^{(0)})+\eps^{(1)}\\
            &=(I+\beta_0I-\alpha_0A)r^{(0)}+(\beta_0I-\alpha_0A)\eps^{(0)}+\eps^{(1)}
            		-A\epss^{(1)},
        \end{align*}
 from which the assertion follows for $k=1$. Now, let the claim hold for some $k\ge 1$, then, we get by induction that
        \begin{align*}
            x^{(k+1)}&=x^{(k)}+\alpha_kd^{(k)}+\epss^{(k+1)},\\
                   	&=x^{(0)}+p^{(k)}(A)r^{(0)}
				+ \sum_{j=0}^{k-1} q_{k-j-1}^{(k)}(A)\eps^{(j)}
				+\sum_{j=1}^{k} g_{k-j}^{(k)}(A)\epss^{(j)}\\
            		&\qquad +\alpha_k\left(\pp^{(k)}(A)r^{(0)}
				+\sum_{j=0}^{k} \qq_{k-j}^{(k)}(A)\eps^{(j)}
				+A\sum_{j=1}^{k} \gt_{k-j}^{(k)}(A)\epss^{(j)}\right)
            			+\epss^{(k+1)}\\
            		&=x^{(0)}+p^{(k+1)}(A)r^{(0)}
				+\sum_{j=0}^{k}q_{k-j}^{(k+1)}(A)\eps^{(j)}
            			+\sum_{j=1}^{k+1}g_{k-j+1}^{(k+1)}(A)\epss^{(j)},
        \end{align*}
        with $p^{(k+1)}:= p^{(k)}+\alpha_k\pp^{(k)}$, $q_j^{(k+1)}:=q_j^{(k)}+\alpha_k\qq_j^{(k)}$ for $j<k$ and $q_k^{(k+1)}:=\alpha_k\qq_k^{(k)}$ as well as $g_j^{(k+1)}:= g_j^{(k)}+\alpha_kt\gt^{(k)}_{j-1}$ for $j>0$ and $g_{0}^{(k+1)}:=1$. Note that the properties stated
        in this Lemma hold for the polynomials
        $p^{(k+1)}$, $q_j^{(k+1)}$ and $g_j^{(k+1)}$. Finally,
	\begin{align*}
            d^{(k+1)}
            &= r^{(k+1)}+\beta_k d^{(k)}+\eps^{(k+1)} \\
            &= (I-A\,p^{(k)}(A))r^{(0)}
            	-A\sum_{j=0}^{k}q_{k-j}^{(k)}(A)\eps^{(j)}
            	-A\sum_{j=1}^{k+1}g_{k-j+1}^{(k)}(A)\epss^{(j)}\\
            &\quad
            	+\beta_k\left(\pp^{(k)}(A)r^{(0)}
		+\sum_{j=0}^{k} \qq_{k-j}^{(k)}(A)\eps^{(j)}
		+A\sum_{j=1}^{k}\gt_{k-j}^{(k)}(A)\epss^{(j)}\right)
            	+\eps^{(k+1)}\\
            &=
            	(I-Ap^{(k)}(A)
		+\beta_k\pp^{(k)}(A))r^{(0)}
		+\eps^{(k)}
		-A \sum_{j=0}^{k}q_{k-j}^{(k)}(A)\eps^{(j)}\\
	&\quad
            	+\beta_k\sum_{j=0}^{k}\qq_{k-j}^{(k)}(A)\eps^{(j)}
            	-A\epss^{(k+1)}-A\sum_{j=1}^k(g_{k-j+1}^{(k)}(A)
		-\beta_k \gt_{k-j}^{(k)}(A))\epss^{(j)},
        \end{align*}
        which completes the proof.
\end{proof}

Similar to its exact counterpart, the perturbed (P)CG is thus a polynomial method
both in the initial residual and in the perturbations. It is easy to see
that the polynomials $\{p^{(j)}\}_j$ are \emph{not}
orthogonal w.r.t.\ the discrete inner product
$\innerp{p, q}_{\CG}:=\innerp{p(A)r^{(0)}, q(A)r^{(0)}}$, see \cite[Example 2.4.8]{Fischer}. 
Consequently the resulting iterates do not minimize 
$\innerp{p^{(k)}, t^{-1}p^{(k)}}_{\CG}=\|e^{(k)}\|_A^2$. 

Though the perturbed CG is a straightforward extension of its exact counterpart,
its not a constructive\footnote{By `constructive' we refer to methods which
are derived from optimization problems, such as the exact CG method is derived
by minimizing the energy norm of the error.} method,
in particular, it is not a Krylov method,
and thus standard notions of optimality are lost.

One could try to improve the estimates by considering a different inner
product in order to obtain orthogonal polynomials. However, since one has
no control over the directions of the perturbations, this route does not seem
to be promising.

\section{HTucker-Adaptive Wavelet-Galerkin Method (HT-AWGM)}\label{sec:awgm}

As already said earlier, the new HT-AWGM relies on the strategy
\begin{align*}
    \cdots\rightarrow\textbf{SOLVE}
    \rightarrow\textbf{ESTIMATE}
    \rightarrow\textbf{MARK and REFINE}
    \rightarrow\cdots
\end{align*}
which is analogous to an adaptive FEM solver. We detail the ingredients as follows.

\subsection{\textbf{SOLVE}}
We use a Galerkin solver based on the CG iterations
described in \S\ref{sec:cg} with the
approximate separable preconditioning from \cite{MarkusSobolev}, see \S\ref{sec:prec}. 
The arising procedure is referred to as
$$\textbf{PCG}(\bs S^{-1}(\delta), \bs A^\delta, \bs f^\delta,
\bs u^{(0)}, \Lambda, \varepsilon),$$
where
$\bs S^{-1}(\delta)$ is the preconditioning operator,
$\bs A^\delta$ is the discrete (infinite dimensional) operator,
$\bs f^\delta$ is the right hand side,
$\bs u^{(0)}$ is the initial guess,
$\Lambda$ is a finite index set on which the iterations are performed
and $\varepsilon$ is the residual tolerance.

\begin{remark}
We shall assume a separable structure for the operator $A$ and thus
will not discuss the approximation of more general operators
(for this see, e.g., \cite{DB}).
Thus, evaluating $\bs A_{\Lambda}\bs u$ on a finite set $\Lambda$ boils down to applying the
low dimensional components of $\bs A_{\Lambda}$ to the leafs of $\bs u$.
For the low dimensional evaluation we use the
evaluation procedures from \cite[Chapter 6]{KestlerPhd}.
\end{remark}

\subsection{\textbf{ESTIMATE}}
For this step we need a procedure for approximate residual
evaluation. This requires determining an extended index set
$\tilde{\Lambda}\supset\Lambda$ based on a desired tolerance $\varepsilon>0$
and evaluating
$$\|R_{\tilde{\Lambda}}\left(\bs f^\delta-
\bs A^\delta E_{\Lambda}\bs u_{\Lambda}\right)\|.$$
Again, due to the separable structure of $\bs A$, we only need to build
$\tilde{\Lambda}=\tilde{\Lambda}_1\times\cdots\times\tilde{\Lambda}_d$
from the low dimensional components $\tilde{\Lambda}_j$, $j=1,\ldots,d$.
For this purpose, we use the
method from \cite[Chapter 7]{KestlerPhd}.
Additionally, we need to approximate scaling
$\bs S^{-1}(\delta)$, which we discuss in detail later.
We refer to this procedure as
$$\textbf{RES}(\bs S^{-1}(\delta), \bs A^\delta, \bs f^\delta,
\bs u^\delta, \varepsilon),$$ where $\varepsilon$ refers to the relative
accuracy in the sense that
\begin{align*}
    \|\left(\bs f^\delta-
    \bs A^\delta E_{\Lambda}\bs u_{\Lambda}\right)-\tilde{\bs r}\|\leq
    \varepsilon\|\tilde{\bs r}\|,
\end{align*}
and $\tilde{\bs r}$ is the approximate residual.

\subsection{\textbf{MARK and REFINE}}
In AFEM, one first marks certain elements, which are then refined by a chosen
strategy: \textbf{REFINE}. In AWGM, these steps are performed together.
The current index set $\Lambda$ is extended, which drives
the adaptivity of the algorithm. We  use a standard bulk chasing
strategy with a parameter $\alpha\in (0,1)$, described as follows.
Suppose the current approximation
$\bs u$ is supported on $\Lambda$, then we determine a (minimal) set
$\tilde{\Lambda}\supset\Lambda$ on which the approximate residual evaluation is performed.
Then, we  compute an intermediate set $\bar{\Lambda}$ with
$\Lambda\subset\bar{\Lambda}\subset\tilde{\Lambda}$ such that
\begin{align}\label{eq:bulk}
    \|R_{\bar{\Lambda}}\bs r\|\geq\alpha\|\bs r\|,
\end{align}
where $\bs r$ is the approximate residual supported on $\tilde{\Lambda}$.

In a low dimensional setting, \cref{eq:bulk} is realized 
by an approximate sorting of the entries in $\bs r$ and forming $\bar{\Lambda}$ by the minimal number
of largest entries that satisfy \cref{eq:bulk}. Such an approach is clearly not 
feasible for large dimensions $d\gg 1$. 

In the tensor setting we can only use low dimensional quantities and thus determine
$\bar{\Lambda}$ by sorting the contractions $\pi_j(\bs r)$ using
the \textbf{COARSE} routine from the low dimensional setting, where
\textbf{COARSE}($\bs u$, $\varepsilon$) returns a tensor
$\bs v$ with $\|\bs u-\bs v\|\leq\varepsilon.$
We refer to the resulting procedure as
$$\textbf{EXPAND}(\Lambda, \bs r, \alpha).$$

\subsection{HT-AWGM Algorithm}
We now have all algorithmic ingredients at hand to describe a general AWGM procedure based on a tensor
format in \Cref{htawgm}. We use the notation $\mc C(\bs u,\varepsilon)$ to
denote $\textbf{COARSE}(\bs u, \varepsilon)$; $\mc T(\bs u, \varepsilon)$ to
denote truncation and
\begin{align*}
    \|(\bs A^\delta)^{-1}\|\leq \lambda_{\min},\quad
    \|\bs A^\delta\|\leq\lambda_{\max}
\end{align*}
\begin{algorithm}
    \caption{\textbf{HT-AWGM}}
    \label{htawgm}
\begin{algorithmic}[1]
    \REQUIRE Tolerance $\varepsilon>0$, initial finite
             index set $\Lambda^{(0, 0)}\ne\emptyset$, $\delta>0$,
             $\alpha\in (0,1)$,
             $\omega_0, \omega_1, \omega_2, \omega_3, \omega_4, \omega_5>0$,
             $M\in \N$.
    \STATE   $\bs u^{(0, 0)}\leftarrow 0$,
             $\bs r^{(0, 0)}\leftarrow\omega_0$,
             $\omega_0^{(0)}\leftarrow\omega_0$
    \FOR{$k=0,\ldots$}
        \FOR{$m=0,\ldots,M$}
            \STATE  $\bs u^{(k, m+1)}\leftarrow
                    \textbf{PCG}(\bs S^{-1}(\delta), \bs A^\delta, \bs f^\delta,
                    \bs u^{(k, m)}, \Lambda^{(k, m)}, \omega_2\|\bs r^{(k, m)}\|)$
                    \label{htawgm:pcg}
            \STATE  $\bs r^{(k, m+1)}\leftarrow
                    \textbf{RES}(\bs S^{-1}(\delta), \bs A^\delta, \bs f^\delta,
                    \bs u^{(k, m+1)},\omega_1)$
                    \label{htawgm:res}
            \IF{$(1+\omega_1)\|\bs r^{(k, m+1)}\|\leq\varepsilon$}
                \RETURN $\bs u_\varepsilon\leftarrow\bs u^{(k, m+1)}$
            \ENDIF
            \IF{$(1+\omega_1)\|\bs r^{(k, m+1)}\|\leq
                \omega_3\omega_0^{(k)}$, or
                $m=M$}\label{iftol}
                \STATE  $\bs u^{(k+1, 0)}\leftarrow \mc T(\bs u^{(k, m+1)},
                        \omega_4\lambda^{-1}_{\min}\omega_0^{(k)})$
                        \label{htawgm:trunc}
                \STATE  $\bs u^{(k+1, 0)}\leftarrow
                        \mc C(\bs u^{(k+1, 0)}, \omega_5\lambda^{-1}_{\min}
                        \omega_0^{(k)})$
                        \label{htawgm:coarse}
                \STATE  $\Lambda^{(k+1, 0)}\leftarrow\supp(\bs u^{(k+1, 0)})$
                \STATE  $\bs r^{(k+1, 0)}\leftarrow
                        \textbf{RES}(\bs S^{-1}(\delta), \bs A^\delta, \bs f^\delta,
                        \bs u^{(k+1, 0)},\omega_1)$
                \STATE  $\omega_0^{(k+1)}\leftarrow (\omega_3
                        +\omega_4+\omega_5)\omega_0^{(k)}$
                \BREAK
            \ENDIF
            \STATE  $\Lambda^{(k, m+1)}\leftarrow
                    \textbf{EXPAND}(\Lambda^{(k, m)},
                    \bs r^{(k, m+1)}, \alpha)$
        \ENDFOR
    \ENDFOR
\end{algorithmic}
\end{algorithm}
The involved parameters have the following meaning:
\begin{itemize}
    \item $\omega_0$ is the initial estimate for the right hand side, i.e.,
                     $\omega_0\geq\|\bs f^\delta\|$,
    \item $\omega_1$ is the relative precision of the residual evaluation,
    \item $\omega_2$ drives the tolerance for the approximate Galerkin solutions,
    \item $\omega_3$ is the required error reduction rate before
                     truncation and coarsening,
    \item $\omega_4$ drives the truncation tolerance that controls rank growth,
    \item $\omega_5$ drives the coarsening tolerance that controls index
                     set growth and influences rank growth by controlling
                     the maximum wavelet level,
    \item $\alpha$   is the bulk criterion parameter that drives adaptivity.
\end{itemize}


\subsection{Convergence of HT-AWGM}
We start proving the convergence of the algorithm by
investigating the approximate residual evaluation. Two types of approximation
are involved for the operator:
a) the finite index set approximation of $\bs A$ and b) the
approximation of the exact diagonal scaling $\bs D^{-1}$,
resp.\ $\bs S^{-1}(\delta)$.

\begin{lemma}\label{lemma:res}
    Let $\bs v$ be finitely supported and let $\bs A_{\varepsilon}$ denote an
    approximation to $\bs A$ in the sense that
    $\left\|\bs D^{-1}(\bs A-\bs A_\varepsilon)\bs D^{-1}\bs v\right\| \leq\varepsilon$.
    Moreover, let $\bs f_\varepsilon$ be an approximation to $\bs f$ such that
    $\left\|\bs D^{-1}(\bs f-\bs f_\varepsilon)\right\|\leq\varepsilon$.
    Finally, assume that
    $\|\bs S^{-1}(\delta)\bs f_\varepsilon\|\leq C_{\bs f}\|\bs f^\delta\|$
    for all $\varepsilon>0$
    with $C_{\bs f}\geq 1$. Then,
    \begin{align}\label{eq:res}
        &\left\|(\bs f^\delta-\bs A^\delta\bs v)-\bs S^{-1}(\delta, \eta)
        (\bs f_\varepsilon-\bs A_\varepsilon \bs S^{-1}(\delta,\eta)\bs v)\right\|
        \notag \\
        &\leq\varepsilon(1+\delta)(2+\delta)+\frac{\eta}{1-\delta}
        \left(C_{\bs f}\|\bs f^\delta\|+2\|\bs A^\delta\|\|\bs v\|\right)
        +2\frac{(1+\delta)^2}{1-\delta}\eta\varepsilon,
    \end{align}
    with $\eta, \delta>0$.
\end{lemma}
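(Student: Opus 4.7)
The plan is to split the discrepancy additively, handle the $\bs f$--part and the $\bs A$--part separately, and within each exploit the fact that $\bs S^{-1}(\delta)$, $\bs S^{-1}(\delta,\eta)$ and $\bs D^{-1}$ are all diagonal (hence mutually commuting) so that we can manufacture the exact operators for which bounds (a.1)--(a.9) are available. Writing $\bs S := \bs S(\delta)$, $\bs S_\eta := \bs S(\delta,\eta)$, the quantity to estimate equals $E_f - E_A$ with
$E_f := \bs S^{-1}\bs f - \bs S_\eta^{-1}\bs f_\varepsilon$ and $E_A := \bs S^{-1}\bs A\bs S^{-1}\bs v - \bs S_\eta^{-1}\bs A_\varepsilon \bs S_\eta^{-1}\bs v$, and the target bound splits neatly into contributions of type $\varepsilon$, $\eta$, and $\eta\varepsilon$.

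For $E_f$ I would telescope as $E_f = \bs S^{-1}(\bs f-\bs f_\varepsilon) + (\bs S^{-1}-\bs S_\eta^{-1})\bs f_\varepsilon$. The first summand is bounded via $\bs S^{-1} = [\bs S^{-1}\bs D]\bs D^{-1}$ and \cref{eq:ds}, producing $(1+\delta)\varepsilon$. For the second I would use the key trick of the proof: since all operators involved are diagonal, $(\bs S^{-1}-\bs S_\eta^{-1})\bs f_\varepsilon = [\bs S(\bs S^{-1}-\bs S_\eta^{-1})R_{\J_T}]\cdot \bs S^{-1}\bs f_\varepsilon$ (assuming $\supp \bs f_\varepsilon \subset \J_T$), and then \cref{eq:ids} gives $\tfrac{\eta}{1-\delta}\|\bs S^{-1}\bs f_\varepsilon\|\leq \tfrac{\eta}{1-\delta}C_{\bs f}\|\bs f^\delta\|$ by hypothesis.

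For $E_A$ I would use the three-way telescoping
\begin{align*}
E_A = \underbrace{\bs S^{-1}(\bs A-\bs A_\varepsilon)\bs S^{-1}\bs v}_{E_A^{(1)}} + \underbrace{(\bs S^{-1}-\bs S_\eta^{-1})\bs A_\varepsilon\bs S^{-1}\bs v}_{E_A^{(2)}} + \underbrace{\bs S_\eta^{-1}\bs A_\varepsilon(\bs S^{-1}-\bs S_\eta^{-1})\bs v}_{E_A^{(3)}}.
\end{align*}
Bounding $E_A^{(1)}$ by conjugation, $E_A^{(1)} = [\bs S^{-1}\bs D]\,[\bs D^{-1}(\bs A-\bs A_\varepsilon)\bs D^{-1}]\,[\bs D\bs S^{-1}]\bs v$, combined with \cref{eq:ds} on the flanks and the hypothesis on $\bs A-\bs A_\varepsilon$, produces $(1+\delta)^2\varepsilon$. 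Together with the first summand in $E_f$ this yields $\varepsilon(1+\delta)(2+\delta)$. For $E_A^{(2)}$ I again pull the diagonal factor to the left: $E_A^{(2)} = [\bs S(\bs S^{-1}-\bs S_\eta^{-1})R_{\J_T}]\cdot \bs S^{-1}\bs A_\varepsilon\bs S^{-1}\bs v$, use \cref{eq:ids}, and estimate $\|\bs S^{-1}\bs A_\varepsilon\bs S^{-1}\bs v\|\leq \|\bs A^\delta\|\|\bs v\| + (1+\delta)^2\varepsilon$ by adding and subtracting $\bs A$ and re-using the estimate for $E_A^{(1)}$; this contributes one copy each of $\tfrac{\eta}{1-\delta}\|\bs A^\delta\|\|\bs v\|$ and $\tfrac{(1+\delta)^2}{1-\delta}\eta\varepsilon$. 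For $E_A^{(3)}$ I proceed symmetrically: $E_A^{(3)} = \bs S_\eta^{-1}\bs A_\varepsilon\bs S^{-1}\cdot[\bs S(\bs S^{-1}-\bs S_\eta^{-1})R_{\J_T}]\bs v$, using (a.7) (i.e., $\bs S_\eta^{-1}\leq \bs S^{-1}$) to write $\|\bs S_\eta^{-1}\bs A_\varepsilon\bs S^{-1}\|\leq \|\bs S^{-1}\bs A_\varepsilon\bs S^{-1}\|\leq \|\bs A^\delta\| + (1+\delta)^2\varepsilon$, and then \cref{eq:ids} again; this gives the second $\tfrac{\eta}{1-\delta}\|\bs A^\delta\|\|\bs v\|$ and the second $\tfrac{(1+\delta)^2}{1-\delta}\eta\varepsilon$. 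Summation of all pieces reproduces exactly \cref{eq:res}.

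The main obstacle, as usual in this type of perturbation/preconditioning argument, is the non-commutativity between $\bs A$ (resp.\ $\bs A_\varepsilon$) and the diagonal scalings: one cannot simply factor $\bs A_\varepsilon$ past $\bs S$ or $\bs D$. The strategy is therefore to insert $\bs D\bs D^{-1}$ strategically around $\bs A - \bs A_\varepsilon$, so that the hypothesis $\|\bs D^{-1}(\bs A-\bs A_\varepsilon)\bs D^{-1}\bs v\|\leq \varepsilon$ applies, while everything left and right of it is diagonal and can be commuted and estimated via (a.1)--(a.9). The secondary delicate point is to ensure that the factor attached to $R_{\J_T}$ in \cref{eq:ids} is always the one sitting next to $\bs v$ (or $\bs f_\varepsilon$), whose support lives in $\J_T$ for $T$ sufficiently large; keeping track of this support-matching throughout the three telescopings is the bookkeeping heart of the proof.
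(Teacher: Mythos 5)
Your decomposition is exactly the paper's: $E_f$ is its term (I), $E_A^{(1)}$ is (II.1), and $E_A^{(2)}+E_A^{(3)}$ reproduce the two halves of (II.2). The arguments for $E_f$, $E_A^{(1)}$ and $E_A^{(2)}$ are correct and mirror the paper's (and the support-matching remark about $R_{\J_T}$ is the right point to flag).

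The treatment of $E_A^{(3)}$, however, has a real gap. You invoke the operator-norm estimate $\|\bs S^{-1}\bs A_\varepsilon\bs S^{-1}\|\le\|\bs A^\delta\|+(1+\delta)^2\varepsilon$, but the lemma's hypothesis only controls $\|\bs D^{-1}(\bs A-\bs A_\varepsilon)\bs D^{-1}\bs v\|$ \emph{for the specific vector $\bs v$}; it gives no bound on $\bs A-\bs A_\varepsilon$ as an operator, and in $E_A^{(3)}$ the compressed operator acts on the rescaled vector $\bs w:=\bigl[\bs S(\bs S^{-1}-\bs S_\eta^{-1})R_{\J_T}\bigr]\bs v\ne\bs v$. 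Moreover, even granting the operator-norm bound, your own chain of inequalities yields
\begin{align*}
\|E_A^{(3)}\|\le\bigl(\|\bs A^\delta\|+(1+\delta)^2\varepsilon\bigr)\,\|\bs w\|
\le\frac{\eta}{1-\delta}\|\bs A^\delta\|\,\|\bs v\|
+\frac{(1+\delta)^2}{1-\delta}\,\eta\varepsilon\,\|\bs v\|,
\end{align*}
so the $\eta\varepsilon$ contribution should carry an extra factor $\|\bs v\|$; you silently drop it, and for general $\bs v$ your derivation does \emph{not} reproduce the stated term $2\frac{(1+\delta)^2}{1-\delta}\eta\varepsilon$. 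To close this one must either strengthen the compressibility hypothesis to an operator-norm bound on $\ell_2(\supp\bs v)$ that is \emph{relative} to $\|\bs v\|$ (so that applied to $\bs w$ it yields $\varepsilon\cdot\|\bs w\|/\|\bs v\|$, killing the spare $\|\bs v\|$), or argue explicitly how the hypothesis for $\bs v$ transfers to the diagonal rescaling $\bs w$ on the same support. Be aware that the paper's own proof is terse at this exact spot and defers (II.2) to \cite[Prop.~15]{MarkusSobolev}; the step you glossed over is the same one the paper hides behind that citation, so this is the part worth spelling out rather than asserting.
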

\begin{proof}
    We begin by splitting the left-hand side of \cref{eq:res} into two parts
    \begin{align*}
        &\left\|(\bs f^\delta-\bs A^\delta\bs v)-
        \bs S^{-1}(\delta, \eta)(\bs f_\varepsilon-
        \bs A_\varepsilon\bs S^{-1}(\delta,\eta)\bs v)\right\|
        \\
        &\quad\leq \underbrace{\left\|\bs f^\delta-
        \bs S^{-1}(\delta,\eta)\bs f_\varepsilon\right\|}_{=:\text{ (I)}}
        + \underbrace{\left\|\bs A^\delta\bs v-
        \bs S^{-1}(\delta,\eta)\bs A_\varepsilon\bs S^{-1}(\delta,\eta)
        \bs v\right\|}_{=:\text{ (II)}}.
    \end{align*}
    We further split (I) as
    \begin{align*}
        \left\|\bs f^\delta-\bs S^{-1}(\delta,\eta)\bs f_\varepsilon\right\|
        \leq \|\bs S^{-1}(\delta)(\bs f-\bs f_\varepsilon)\|+
        \|(\bs S^{-1}(\delta)-\bs S^{-1}(\delta,\eta))\bs f_\varepsilon\|
    \end{align*}
    and get the first part
    $\|\bs S^{-1}(\delta)(\bs f-\bs f_\varepsilon)\|=
        \|\bs S^{-1}(\delta)\bs D\bs D^{-1}(\bs f-\bs f_\varepsilon)\|\leq
        (1+\delta)\varepsilon$,
    where the last inequality follows from the property
    $\|\bs S^{-1}(\delta)\bs D\|\leq 1+\delta.$
    For the second part in (I) we get
    \begin{align*}
        \|(\bs S^{-1}(\delta)-\bs S^{-1}(\delta,\eta))\bs f_\varepsilon\|&=
        \|(\bs S^{-1}(\delta)-\bs S^{-1}(\delta,\eta))
        \bs S(\delta)\bs S^{-1}(\delta)\bs f_\varepsilon\|\\
        &\leq
        \frac{\eta}{1-\delta}\|\bs S^{-1}(\delta)\bs f_\varepsilon\|
        \leq
        C_{\bs f}\frac{\eta}{1-\delta}\|\bs f^\delta\|,
    \end{align*}
    where we used the fact
    $\|(\bs S^{-1}(\delta)-\bs S^{-1}(\delta,\eta))\bs S(\delta)\|\leq\frac{\eta}{1-\delta}$.
    In a similar fashion, we split (II) into 2 parts
    \begin{align*}
        \text{(II)}
        &\leq
        \underbrace{\|\bs S^{-1}(\delta)(\bs A-\bs A_\varepsilon) \bs S^{-1}(\delta)\bs v\|}_{=:\text{(II.1)}}\\
        &\quad+\underbrace{\|\bs S^{-1}(\delta)\bs A_\varepsilon\bs S^{-1}(\delta)
            \bs v -\bs S^{-1}(\delta, \eta)\bs A_\varepsilon
            \bs S^{-1}(\delta, \eta)\bs v\|}_{=:\text{(II.2)}},
    \end{align*}
    and follow the proof of \cite[Proposition 15]{MarkusSobolev}. For the
    first term we get
    \begin{align*}
        \text{(II.1)}
        &=
        \|[\bs S^{-1}(\delta)\bs D]\bs D^{-1}(\bs A-\bs A_\varepsilon)
        \bs D^{-1}[\bs D\bs S^{-1}(\delta)]\bs v\|
        \leq(1+\delta)^2\varepsilon,
    \end{align*}
    where we used \eqref{eq:ds}.
    The second term $\text{(II.2)}$ involves the approximation errors
    $\|\bs S(\delta)(\bs S^{-1}(\delta)-\bs S^{-1}(\delta,\eta))\bs v\|$ and
    $\|\bs S^{-1}(\delta)(\bs A-\bs A_\varepsilon)\bs S^{-1}(\delta)\bs v\|$.
    For the former we use \eqref{eq:ids} and the latter can be bounded by
    $(1+\delta)^2\varepsilon$ as in (II.1). Altogether we get
    \begin{align*}
        \text{(II.2)}&\leq
        \frac{2\eta}{1-\delta}(\|\bs A^\delta\|\|\bs v\|+
        (1+\delta)^2\varepsilon),
    \end{align*}
    which completes the proof.
\end{proof}

For a given tolerance $\textit{tol}>0$ and a finite tensor $\bs v$, we can specify
$\varepsilon$ and $\eta$ as
\begin{align*}
    \varepsilon\leq\frac{\textit{tol}}{3(1+\delta)(2+\delta)},\quad
    \eta\leq\min\left\{\frac{1-\delta}{2},
    \frac{\textit{tol}(1-\delta)}{3(C_{\bs f}\|\bs f^\delta\|+
    2\|\bs A^\delta\|\|\bs v\|)}\right\}.
\end{align*}
By \cref{eq:res} this would ensure
\begin{align*}
    \|(\bs f^\delta-\bs A^\delta\bs v)-
    \bs S^{-1}(\delta,\eta)(\bs f_\varepsilon-\bs A_\varepsilon
    S^{-1}(\delta,\eta)\bs v)\|\leq\textit{tol}.
\end{align*}
As a consequence, given the parameter $\omega_1\in (0,1)$ from \cref{htawgm}
and some fixed
$\delta>0$, we can now use \cref{eq:res} to ensure
\begin{align}\label{eq:res_acc}
    \|(\bs f^\delta-\bs A^\delta\bs v)-
    \bs S^{-1}(\delta,\eta)(\bs f_\varepsilon-\bs A_\varepsilon
    S^{-1}(\delta,\eta)\bs v)\|
    \leq \omega_1\|\bs S^{-1}(\delta,\eta)(\bs f_\varepsilon-\bs A_\varepsilon
    S^{-1}(\delta,\eta)\bs v)\|.
\end{align}

With all the above ingredients at hand, it is now easy to prove
that \cref{htawgm} converges for an appropriate choice of parameters.
There are two main components. First, we choose $\omega_1$, $\omega_2$ and
$\alpha$ appropriately such that we ensure in each inner iteration
$m\rightarrow m+1$ of Algorithm \ref{htawgm} a guaranteed error
reduction. Second, we choose $\omega_3$, $\omega_4$ and $\omega_5$ such that
after truncation and coarsening we still ensure an error reduction for the
outer iteration $k\rightarrow k+1$.

We use the notation
\begin{align*}
    \|\cdot\|_A:=
    \innerp{\cdot, \bs A^\delta\cdot},
\end{align*}
to denote the energy norm.

\begin{proposition}\label{thm:conv}
    Let $\Lambda^{(0)}=\Lambda_1^{(0)}\times\cdots\times\Lambda_d^{(0)}$
    and all $\Lambda^{(0)}_j$ are
    assumed to have a tree structure as required in \cite[\S 6]{KestlerPhd}.
    Let the parameters satisfy $0<\omega_1<\alpha$ and
    \begin{align*}
        \omega_2<\frac{(1-\omega_1)(\alpha+\omega_1)}{1+\omega_1}
        \kappa(\bs A^\delta)^{-1}.
    \end{align*}
    This guarantees an error reduction in the inner iterations
    \begin{align}\label{eq:energyred}
        \|\bs u-\bs u^{(k, m+1)}\|_A\leq \vartheta\|\bs u-\bs u^{(k, m)}\|_A,
    \end{align}
    with
    \begin{align}\label{eq:theta}
        \vartheta:=\left(1-\left(\frac{\alpha-\omega_1}{1+\omega_1}\right)^2
        \kappa^{-1}(\bs A^\delta)+
        \left(\frac{\omega_2}{1-\omega_1}\right)^2\kappa(\bs A^\delta)
        \right)^{1/2}<1
    \end{align}
    Moreover, if $M\in \N$ is chosen such that
    \begin{align}\label{eq:M}
        M\geq M^*=M^*(\delta):=\left\lceil
        \left|\frac{\ln(\omega_3[\kappa(\bs A^\delta)]^{-1/2})}
        {\ln(\vartheta)}\right|
        \right\rceil,
    \end{align}
    and
    \begin{align}\label{eq:om35}
        \omega_3+\omega_4+\omega_5<1,
    \end{align}
    then the error decreases in each outer iteration such that
    \begin{align}\label{eq:outerred}
        \|\bs u-\bs u^{(k, 0)}\|\leq \lambda_{\min}^{-1}
        \omega_0(\omega_3+\omega_4+
        \omega_5)^k.
    \end{align}
    This ensures \cref{htawgm} terminates after at most
    $K^*M^*$
    steps, where
    \begin{align}\label{eq:K}
        K^*=K^*(\varepsilon,\delta):=\left\lceil
        \left|\frac{\ln([\varepsilon\kappa(\bs A^\delta)
        \omega_3\omega_0(1+\omega_1)]^{-1}(1-\omega_1))}
        {\ln(\omega_3+\omega_4+\omega_5)}\right|
        \right\rceil,
    \end{align}
    with the output satisfying
    \begin{align*}
        \|\bs f^\delta-\bs A^\delta\bs u_\varepsilon\|\leq\varepsilon.
    \end{align*}
\end{proposition}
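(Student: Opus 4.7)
The plan is to establish the three assertions in the order stated: the inner-loop contraction \eqref{eq:energyred}, the outer geometric decay \eqref{eq:outerred}, and the termination bound \eqref{eq:K}. The architecture is the classical AWGM convergence proof, but with two extra perturbation layers (the separable preconditioner and the inexact PCG solve) that must be tracked through every estimate.

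For the inner contraction I fix $k,m$ and denote by $\bs R := \bs f^\delta-\bs A^\delta\bs u^{(k,m)}$ the exact residual and by $\bs u^\star_\Lambda$ the exact Galerkin projection onto $\Lambda^{(k,m)}$. Combining the bulk property $\|R_{\Lambda^{(k,m)}}\bs r^{(k,m)}\|\ge\alpha\|\bs r^{(k,m)}\|$ delivered by EXPAND with the relative accuracy \eqref{eq:res_acc} of RES via a reverse-triangle argument gives the true-residual saturation
\[
\|R_{\Lambda^{(k,m)}}\bs R\| \ge \frac{\alpha-\omega_1}{1+\omega_1}\|\bs R\|.
\]
Galerkin orthogonality (the Pythagorean identity for $\|\bs u-\bs u^{(k,m)}\|_A^2$) together with the spectral bounds $\|R_{\Lambda^{(k,m)}}\bs R\|^2\le\lambda_{\max}\|\bs u^\star_\Lambda-\bs u^{(k,m)}\|_A^2$ and $\|\bs R\|^2\ge\lambda_{\min}\|\bs u-\bs u^{(k,m)}\|_A^2$ then transforms this saturation into the exact-Galerkin reduction $\|\bs u-\bs u^\star_\Lambda\|_A^2 \le (1-\kappa(\bs A^\delta)^{-1}((\alpha-\omega_1)/(1+\omega_1))^2)\|\bs u-\bs u^{(k,m)}\|_A^2$. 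The PCG routine contributes an additive error which I bound using $\lambda_{\min}\|\cdot\|_A^2\le\|\bs A^\delta\cdot\|^2$ combined with $\|\bs r^{(k,m)}\|\le\|\bs R\|/(1-\omega_1)\le\sqrt{\lambda_{\max}}/(1-\omega_1)\|\bs u-\bs u^{(k,m)}\|_A$ to arrive at $\|\bs u^\star_\Lambda-\bs u^{(k,m+1)}\|_A\le\frac{\omega_2\sqrt{\kappa(\bs A^\delta)}}{1-\omega_1}\|\bs u-\bs u^{(k,m)}\|_A$. A second Pythagoras (since $\bs u^{(k,m+1)}$ is still supported in $\Lambda^{(k,m)}$) combines the two bounds into \eqref{eq:energyred} with the $\vartheta$ of \eqref{eq:theta}, and the hypothesis on $\omega_2$ ensures $\vartheta<1$.

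For the outer decay, two exit conditions of the inner loop must be handled. If the tolerance branch fires first, then \eqref{eq:res_acc} yields $\|\bs R^{(k,m+1)}\|\le\omega_3\omega_0^{(k)}$, hence $\|\bs u-\bs u^{(k,m+1)}\|\le\lambda_{\min}^{-1}\omega_3\omega_0^{(k)}$; the truncation and coarsening at lines \ref{htawgm:trunc}--\ref{htawgm:coarse} contribute at most $(\omega_4+\omega_5)\lambda_{\min}^{-1}\omega_0^{(k)}$, so the triangle inequality with the recursion $\omega_0^{(k+1)}=(\omega_3+\omega_4+\omega_5)\omega_0^{(k)}$ under \eqref{eq:om35} iterates to \eqref{eq:outerred}. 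If instead the $m=M$ branch fires, applying \eqref{eq:energyred} $M$ times starting from $\|\bs u-\bs u^{(k,0)}\|_A\le\sqrt{\lambda_{\max}}\lambda_{\min}^{-1}\omega_0^{(k)}$ and converting back via $\|\bs R^{(k,M+1)}\|\le\sqrt{\lambda_{\max}}\|\bs u-\bs u^{(k,M+1)}\|_A$ shows that $M\ge M^*$ as in \eqref{eq:M} is exactly enough to land below the threshold $\omega_3\omega_0^{(k)}/(1+\omega_1)$, reducing to the first case. Finally, iterating \eqref{eq:outerred} until the stopping test in line~6 of \cref{htawgm} is triggered determines $K^*$ as in \eqref{eq:K}, and combining that test with \eqref{eq:res_acc} produces $\|\bs f^\delta-\bs A^\delta\bs u_\varepsilon\|\le\varepsilon$.

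The hard part will be the inner-contraction step: carefully propagating the inexactness of RES through the bulk step to extract the clean constant $(\alpha-\omega_1)/(1+\omega_1)$ on the \emph{true} residual, and pinning down precisely which norm the PCG stopping tolerance $\omega_2\|\bs r^{(k,m)}\|$ actually controls so that the $\sqrt{\kappa(\bs A^\delta)}$ (rather than $\kappa(\bs A^\delta)$) factor surfaces in $\vartheta$. A subtlety is the initial $m=0$ substep of each outer iteration, for which $\Lambda^{(k,0)}=\supp(\bs u^{(k,0)})$ comes from coarsening rather than from EXPAND; verifying that the contraction still applies (or that $\omega_0^{(k)}$ already controls that single step independently of the bulk property) is part of the bookkeeping. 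Everything else is spectral estimates and geometric-series arithmetic.
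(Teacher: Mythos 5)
Your proposal is correct and takes essentially the same route as the paper. Where the paper simply invokes \cite[Prop.\ 4.2]{Stevenson} for the inner contraction \eqref{eq:energyred}, you unfold its content (residual saturation through RES, Galerkin orthogonality, spectral bounds, Pythagoras for the PCG perturbation), and your treatment of the two exit conditions for the outer loop (tolerance branch vs.\ $m=M$ branch), the triangle inequality through $\mc T$ and $\mc C$ with the recursion $\omega_0^{(k+1)}=(\omega_3+\omega_4+\omega_5)\omega_0^{(k)}$, and the geometric-series count for $K^*$ match the paper's argument step for step. The subtlety you flag at the end --- that $\Lambda^{(k,0)}=\supp(\bs u^{(k,0)})$ originates from coarsening rather than from EXPAND, so the bulk property needed for the AWGM contraction is not guaranteed at the first inner step of each outer pass --- is a real issue that the paper's proof also glosses over (it applies $\vartheta^m$ from $m=0$ uniformly), so identifying it is to your credit; a clean fix is to observe that the Galerkin projection on $\Lambda^{(k,0)}$ can only decrease the energy-norm error and then to absorb the PCG perturbation of that one step into the constant, starting the geometric decay from $m=1$.
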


\begin{proof}
    The statement in \cref{eq:energyred} with $\theta$ as in
    \cref{eq:theta} is an immediate application of
    \cite[Prop.\ 4.2]{Stevenson}. The conditions on $\alpha$,
    $\omega_1$ and $\omega_2$ ensure $0<\vartheta<1$.

    In the inner iterations we thus get for any $k$
    \begin{align*}
        \|\bs u-\bs u^{(k, m)}\|&\leq\lambda_{\min}^{-1/2}
        \|\bs u-\bs u^{(k, m)}\|_A
        \leq\lambda_{\min}^{-1/2}\vartheta^m\|\bs u-\bs u^{(k, 0)}\|_A,\\
        &\leq\sqrt{\kappa(\bs A^\delta)}\vartheta^m\|\bs u-\bs u^{(k, 0)}\|
        \leq\sqrt{\kappa(\bs A^\delta)}\vartheta^m\lambda^{-1}_{\min}
        \omega^{(k)}_0.
    \end{align*}
    The requirement \cref{eq:M} ensures
    \begin{align}\label{eq:innerred}
        \|\bs u-\bs u^{(k, M)}\|\leq
        \sqrt{\kappa(\bs A^\delta)}\vartheta^M\lambda^{-1}_{\min}\omega_0^{(k)}
        \leq\omega_3\lambda_{\min}^{-1}\omega_0^{(k)}.
    \end{align}
    Alternatively, the first if-condition in line \ref{iftol} ensures
    \begin{align*}
        \|\bs u-\bs u^{(k, m+1)}\|\leq\lambda_{\min}^{-1}
        \|\bs A^\delta(\bs u-\bs u^{(k, m+1)})\|\leq
        \lambda_{\min}^{-1}(1+\omega_1)\|\bs r^{(k, m+1)}\|
        \leq \omega_3\lambda_{\min}^{-1}\omega_0^{(k)}.
    \end{align*}
    Hence, after truncation and coarsening we obtain
    \begin{align*}
        \|\bs u-\bs u^{(k+1, 0)}\|
        &\leq
        \|\bs u-\bs u^{(k, m+1)}\|+
        \|\bs u^{(k, m+1)}-\mc T(\bs u^{(k, m+1)},
        \lambda_{\min}^{-1}\omega_4\omega_0^{(k)})\|\\
        &\quad+\|\bs u^{k+1, 0}-\mc T(\bs u^{(k, m+1)},
        \lambda_{\min}^{-1}\omega_4\omega_0^{(k)})\|\\
        &\leq \lambda_{\min}^{-1}\omega_0^{(k)}
        (\omega_3+\omega_4+\omega_5)
        =\lambda_{\min}^{-1}\omega_0
        (\omega_3+\omega_4+\omega_5)^{k+1},
    \end{align*}
    which shows \cref{eq:outerred}. Combining \cref{eq:innerred} and
    \cref{eq:outerred}, we obtain
    \cref{eq:K}. Together with \cref{eq:om35} this completes the proof.
\end{proof}


\subsection{Complexity}

The complexity in rank and discretization is controlled by the intermediate
truncation and coarsening steps in line \ref{htawgm:trunc} and
\ref{htawgm:coarse} of \cref{htawgm}. This is done in analogy to the
re-coarsening step in the non-tensor case as in, e.g., \cite{CDD1};
and to the tensor recompression and coarsening as in \cite{DB}.
In \cite{GHS} it was shown that an AWGM without re-coarsening is optimal
for a moderate choice of $\alpha$. Unfortunately, the same ideas do
not carry over to the tensor case. For a detailed discussion, see
Section \ref{sec:discussion}.

In order to capture the optimal ranks and index set size
w.r.t.\ $\bs u$, we must choose a truncation tolerance in
line \ref{htawgm:trunc} and a coarsening tolerance in line
\ref{htawgm:coarse} slightly above the error $\|\bs u-\bs u^{(k, m+1)}\|$.
In addition, since in the tensor case we can only numerically realize
quasi-optimal approximations w.r.t.\ rank and discretization,
quasi-optimality constants from
\cref{eq:hosvd} and \cref{eq:contractqo} are involved.

\begin{proposition}\label{prop:complex}
    Let $\bs u^\delta\in\mc A(\gamma)$ and
    $\pi_j(\bs u^\delta)\in\mc A_s$ for all
    $1\leq j\leq d$. Assume the sequence $\gamma$ is admissible
    \begin{align*}
        \rho(\gamma):=\sup_{n\in\N}\frac{\gamma(n)}{\gamma(n-1)}<\infty.
    \end{align*}
    Finally, let the parameters $\omega_4$, $\omega_5$ satisfy
    \begin{align}\label{eq:reqcomp}
        \omega_4>(\sqrt{2d-3})\omega_3,\notag\quad
        \omega_5>\sqrt{d}(1+\sqrt{2d-3})\omega_3.
    \end{align}
    Then the following estimates hold
    \begin{align*}
        |r(\bs u^{(k, 0)})|_\infty&\leq\gamma^{-1}
        \left(C_0
        (\omega_3+\omega_4+\omega_5)^{-k}\|\bs u^\delta\|_{\mc A(\gamma)}
        \right),\quad
        \|\bs u^{(k, 0)}\|\leq C_1\|\bs u^\delta\|_{\mc A(\gamma)},\\
        \sum_{j=1}^d\#\supp_j(\bs u^{(k, 0)})&\leq C_2
        (\omega_3+\omega_4+\omega_5)^{-k/s}\left(
        \sum_{j=1}^d\|\pi_j(\bs u^\delta)\|_{\mc A_s}\right)^{1/s},\\
        \sum_{j=1}^d\|\pi_j(\bs u^{(k, 0)})\|_{\mc A_s}&\leq
        C_3\sum_{j=1}^d\|\pi_j(\bs u^\delta)\|_{\mc A_s},
    \end{align*}
    with the constants
    \begin{align*}
        C_0&:=\frac{\lambda_{\min}\sqrt{2d-3}}{\omega_0
        (\omega_4-\omega_3\sqrt{2d-3})}\rho(\gamma),\quad
        C_1:=1+\frac{(\omega_3+\omega_4)\sqrt{2d-3}}
        {\omega_4-\omega_3\sqrt{2d-3}},\\
        C_2&:=2d
        \left(\frac{\lambda_{\min}\omega_3\sqrt{2d-3}}
        {\omega_0(\omega_4-\omega_3\sqrt{2d-3})}\right)^{1/s},\\
        C_3&:=2^s(1+3^s)+2^{4s}d^{\max(1, s)}
        \frac{1+\omega_4(\sqrt{2d-3}+
        \sqrt{d}(1+\sqrt{2d-3}))}
        {\omega_4-\omega_3\sqrt{2d-3}}.
    \end{align*}
\end{proposition}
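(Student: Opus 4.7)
I would prove the four estimates in order, with the recurring theme of comparing the output of lines~\ref{htawgm:trunc} and~\ref{htawgm:coarse} of \cref{htawgm} against a best rank or best $N$-term approximation of the target $\bs u^\delta$, absorbing the solver error $\|\bs u^\delta - \bs u^{(k-1,m+1)}\| \leq \omega_3 \lambda_{\min}^{-1}\omega_0^{(k-1)}$ (which is available from the inner loop in the proof of \cref{thm:conv}) via the triangle inequality. The two conditions on $\omega_4,\omega_5$ in \eqref{eq:reqcomp} are exactly what is required so that, after that subtraction, the residual tolerance available for the best approximation is still positive.

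For the rank bound, the plan is to invoke the HOSVD quasi-optimality \eqref{eq:hosvd}: the rank produced by $\mc T$ at tolerance $\omega_4\lambda_{\min}^{-1}\omega_0^{(k-1)}$ is bounded by the smallest $N$ such that the best rank-$N$ approximation to $\bs u^{(k-1,m+1)}$ lies within $\omega_4\lambda_{\min}^{-1}\omega_0^{(k-1)}/\sqrt{2d-3}$. Splitting off the solver error leaves an effective budget of $(\omega_4-\omega_3\sqrt{2d-3})\lambda_{\min}^{-1}\omega_0^{(k-1)}/\sqrt{2d-3}$ against the best rank approximation to $\bs u^\delta$, which by $\bs u^\delta\in\mc A(\gamma)$ is realized at a rank bounded by $\gamma^{-1}(\cdot)$ of that quantity; an extra factor $\rho(\gamma)$ absorbs the discreteness of the $\gamma$-inversion, yielding $C_0$. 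The norm bound is then immediate from $\|\bs u^{(k,0)}\|\leq \|\bs u^\delta\| + \|\bs u^\delta-\bs u^{(k,0)}\|$ and \eqref{eq:outerred}; using $\|\bs u^\delta\|\leq\|\bs u^\delta\|_{\mc A(\gamma)}$ and tracking the truncation/coarsening contribution in the previous step produces $C_1$.

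For the 1D-support bound, I would apply the quasi-optimality of $\textbf{COARSE}$ dimension-wise to the contractions $\pi_j(\bs u^{(k-1,m+1)})$: by \eqref{eq:contractqo} the coarsening error decomposes across dimensions with a $\sqrt{d}$ weight, so the per-dimension tolerance is $\omega_5\lambda_{\min}^{-1}\omega_0^{(k-1)}/\sqrt{d}$. Subtracting the solver and HOSVD-induced perturbations on $\pi_j$ (the latter being $\sqrt{2d-3}$ times the best rank approximation), the assumption $\omega_5 > \sqrt{d}(1+\sqrt{2d-3})\omega_3$ keeps the effective budget positive; combining with $\pi_j(\bs u^\delta)\in\mc A_s$ converts this budget into a bound of the form $\lesssim\varepsilon^{-1/s}\|\pi_j(\bs u^\delta)\|_{\mc A_s}^{1/s}$, and summation over $j=1,\ldots,d$ produces the constant $C_2$.

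The genuine obstacle is the $\mc A_s$-bound on $\pi_j(\bs u^{(k,0)})$, since neither truncation nor coarsening acts directly on the contractions of $\bs u^\delta$: rank truncation modifies the HT bases $\bs U_j^k$ and hence all $\pi_j$, while coarsening along one coordinate can inflate the contractions in the others. Here I would follow the perturbation recipe of \cite{DB}: use the $\mc A_s$-stability of $\textbf{COARSE}$ with the standard factor $2^s(1+3^s)$, bound the extra contribution coming from the solver residual by a further best-$N$-term comparison on the difference $\pi_j(\bs u^{(k-1,m+1)})-\pi_j(\bs u^\delta)$, and finally account for the HOSVD step by combining the $\sqrt{2d-3}$ quasi-optimality with the $\sqrt{d}$ splitting to obtain a prefactor of the form $2^{4s}d^{\max(1,s)}$. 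Carefully collecting the mixed terms, using $\omega_4>\sqrt{2d-3}\omega_3$ once more to make denominators positive, assembles into $C_3$ and closes the proposition.
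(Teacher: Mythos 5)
The paper's entire proof of this proposition is the single sentence ``The proof is an application of \cite[Thm.\ 7]{DB},'' so there is no in-text argument to compare against; what you have written is a reconstruction of the proof of that cited theorem, specialized to the present algorithm and its parameters $\omega_3,\omega_4,\omega_5$. Your reconstruction follows the same mechanism used in \cite{DB}: compare the output of $\mc T$ (line~\ref{htawgm:trunc}) and $\mc C$ (line~\ref{htawgm:coarse}) against a best-rank and a best product $N$-term approximation of $\bs u^\delta$ respectively, absorb the Galerkin error $\omega_3\lambda_{\min}^{-1}\omega_0^{(k-1)}$ by a triangle inequality, and observe that the quasi-optimality factors $\sqrt{2d-3}$ from \eqref{eq:hosvd} and $\sqrt{d}$ from \eqref{eq:contractqo} are precisely what makes the conditions \eqref{eq:reqcomp} necessary to keep the effective tolerance budgets positive. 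That is the correct structure and it correctly locates where every parameter enters. Two spots in your sketch gloss over bookkeeping that is actually delicate. First, deriving $C_1$ from ``$\|\bs u^{(k,0)}\|\leq\|\bs u^\delta\|+\|\bs u^\delta-\bs u^{(k,0)}\|$ and \eqref{eq:outerred}'' does not by itself produce a $k$-independent multiple of $\|\bs u^\delta\|_{\mc A(\gamma)}$; the stated $C_1$ comes from comparing $\bs u^{(k,0)}$ to the near-best rank approximation of $\bs u^\delta$ at the HOSVD budget level (the same auxiliary tensor used for the rank bound), with $\gamma(0)=1$ tying the error level back to $\|\bs u^\delta\|_{\mc A(\gamma)}$. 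Second, assembling $C_3$ exactly requires the specific chain of $\mc A_s$-stability lemmas for contraction-based coarsening in \cite{DB}; your phrase ``carefully collecting the mixed terms'' passes over the step where the HOSVD re-truncation perturbs \emph{all} contractions $\pi_j$ simultaneously, which is where the $d^{\max(1,s)}$ prefactor originates. Neither remark alters the conclusion that your proposed route is the one the paper is implicitly invoking.
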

\begin{proof}
    The proof is an application of \cite[Thm.\ 7]{DB}.
\end{proof}

The complexity requirement
\cref{eq:reqcomp} together with the convergence requirement
\cref{eq:om35} imply
 $\omega_3<[1+\sqrt{2d-3}+\sqrt{d}(1+\sqrt{2d-3})]^{-1}$.

\cref{prop:complex}
ensures the outer iterates $\bs u^{(k, 0)}$ to have quasi-optimal support size
and ranks. We first demonstrate
that the quasi-optimal support size is preserved by the inner
iterates $\bs u^{(k, m)}$.

In the estimates following in this subsection we require the basic assumption
of efficient
approximability of the right hand side, i.e.,
\begin{align}\label{eq:rhs}
    \sum_{j=1}^d\#\pi_j(\bs f^\delta_\varepsilon)\leq C\varepsilon^{-1/s}
    \left(\sum_{j=1}^d\|\pi_j(\bs f^\delta)\|_{\mc A_s}\right)^{1/s},\quad
    \sum_{j=1}^d\|\pi_j(\bs f_\varepsilon^\delta)\|_{\mc A_s}\leq C
    \sum_{j=1}^d\|\pi_j(\bs f^\delta)\|_{\mc A_s},
\end{align}
for any $\varepsilon>0$ and a constant $C>0$ independent of
$\varepsilon$.

\begin{proposition}\label{prop:optsupp}
    Assume that the one dimensional components of $\bs A$ are
    $s^*$-compressible. Let the assumptions of \cref{prop:complex} hold
    for $0<s<s^*$. Moreover, let the assumptions of \cref{thm:cg} be satisfied.
    Then the intermediate index sets satisfy
    \begin{align*}
        \sum_{j=1}^d\#\Lambda_j^{(k, m)}\leq C
        \|\bs u^\delta-\bs u^{(k, m)}\|^{-1/s}
        \left(\sum_{j=1}^d\|\pi_j(\bs u^\delta)\|_{\mc A_s}\right)^{1/s},
    \end{align*}
    for a constant $C$ independent of $k$ and $m$.
\end{proposition}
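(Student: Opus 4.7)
The plan is to control the growth of each one-dimensional support $\Lambda_j^{(k,m)}$ as $m$ advances, then sum over $j=1,\ldots,d$. For $m=0$ the estimate follows directly from \cref{prop:complex} together with the outer-loop error bound \cref{eq:outerred} from \cref{thm:conv}, since both the support size of $\bs u^{(k,0)}$ and the error $\|\bs u^\delta-\bs u^{(k,0)}\|$ are known to scale as prescribed. For $m\geq 1$ I would write
\begin{align*}
    \#\Lambda_j^{(k,m)} = \#\Lambda_j^{(k,0)}+\sum_{l=0}^{m-1}\bigl(\#\Lambda_j^{(k,l+1)}-\#\Lambda_j^{(k,l)}\bigr)
\end{align*}
and bound each increment via the call to \textbf{EXPAND}.

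The core step is to show that each \textbf{EXPAND} step adds at most
$N_l^j\lesssim \|\pi_j(\bs u^\delta)\|_{\mc A_s}^{1/s}\,\|\bs u^\delta-\bs u^{(k,l+1)}\|^{-1/s}$
indices along the $j$-th axis. This rests on three ingredients. First, the bulk-chasing realization based on sorting the contractions $\pi_j(\bs r^{(k,l+1)})$ through the low-dimensional \textbf{COARSE} routine is quasi-optimal; hence the number of newly marked 1D indices is of the order of the smallest $N$ for which a best $N$-term approximation of $\pi_j(\bs r^{(k,l+1)})$ attains error $(1-\alpha')\|\pi_j(\bs r^{(k,l+1)})\|$ for some $\alpha'\in(0,1)$ determined by $\alpha$ and $\omega_1$. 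Second, using the $s^*$-compressibility of the 1D components of $\bs A$ (with $s<s^*$) and the accuracy \cref{eq:res_acc} of the approximate residual, one gets
\begin{align*}
    \|\pi_j(\bs r^{(k,l+1)})\|_{\mc A_s}\lesssim \|\pi_j(\bs u^\delta-\bs u^{(k,l+1)})\|_{\mc A_s}\lesssim \|\pi_j(\bs u^\delta)\|_{\mc A_s}+\|\pi_j(\bs u^{(k,l+1)})\|_{\mc A_s}.
\end{align*}
Third, the right-hand side is uniformly controlled because, by the induction hypothesis, $\supp_j(\bs u^{(k,l+1)})\subseteq\Lambda_j^{(k,l+1)}$ has already been bounded by a quantity proportional to $\|\pi_j(\bs u^\delta)\|_{\mc A_s}^{1/s}\,\|\bs u^\delta-\bs u^{(k,l+1)}\|^{-1/s}$, which one turns into an $\mc A_s$-norm bound via the quasi-optimality constants \cref{eq:hosvd} and \cref{eq:contractqo} as in \cite[Thm.\ 7]{DB}.

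To conclude, one uses $\|\bs r^{(k,l+1)}\|\simeq\|\bs u^\delta-\bs u^{(k,l+1)}\|$ via the spectral bounds of $\bs A^\delta$ and \cref{eq:res_acc}; since \cref{thm:cg} (equivalently \cref{eq:energyred}) gives geometric decay of $\|\bs u^\delta-\bs u^{(k,l)}\|$ in $l$, the quantities $(N_l^j)^{1/s}$ grow geometrically in $l$. The sum $\sum_{l=0}^{m-1}N_l^j$ is then dominated, up to a constant depending on $\vartheta$ and $s$, by its largest term, yielding a bound of the required form for $\#\Lambda_j^{(k,m)}$. Summing over $j$ and absorbing constants into $C$ completes the proof.

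The hard part, I expect, is closing the induction at the level of the $\mc A_s$-norm of the iterate's contractions: because the perturbed CG iterates $\bs u^{(k,l+1)}$ are not near-best approximations, passing from a bound on $\#\supp_j(\bs u^{(k,l+1)})$ to a bound on $\|\pi_j(\bs u^{(k,l+1)})\|_{\mc A_s}$ requires some care, and one must track how the quasi-optimality constants from \cref{eq:hosvd} and \cref{eq:contractqo} accumulate across the inner iterations without blowing up the bound. Keeping those constants independent of $k$ and $m$ is what forces the reliance on the inequalities of \cref{prop:complex} and on the $s<s^*$ assumption for compressibility.
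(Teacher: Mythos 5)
Your incremental decomposition over the inner iterations $l$ is a plausible organizing idea, but the proposal has a genuine gap precisely where you flag the ``hard part''. You attempt to close the induction by inferring a bound on $\|\pi_j(\bs u^{(k,l+1)})\|_{\mc A_s}$ from a bound on $\#\supp_j(\bs u^{(k,l+1)})$, and then to absorb the quasi-optimality constants from \cref{eq:hosvd} and \cref{eq:contractqo}. That step fails in general: a support-size bound on a tensor says nothing about the decay of its contraction entries, so it cannot yield an $\mc A_s$-norm bound, and in fact as you observe the perturbed-CG iterates are not near-best approximations for which such a converse would be available. The paper avoids this entirely. Its Step~2 invokes \cite[Prop.\ 6.7]{CDD1} together with \cite[Thm.\ 8]{DB} and \cref{eq:rhs} to show that each PCG sweep, consisting of an $s^*$-compressible operator application, additions with data of controlled $\mc A_s$-norm, and truncation/coarsening (which only shrink $\mc A_s$-norms up to fixed constants), \emph{directly} propagates $\|\pi_j(\cdot)\|_{\mc A_s}$ with a factor $C(I^*)$ depending only on the iteration bound $I^*$ from \cref{thm:cg}. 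Since the inner loop has at most $M$ sweeps and the outer iterate $\bs u^{(k,0)}$ has $\mc A_s$-bound controlled by \cref{prop:complex}, the $\mc A_s$-norm of every inner iterate is uniformly controlled, and hence so is that of the residual. The support bound then follows in one shot from the bulk criterion, the COARSE quasi-optimality over product sets (your steps concerning $\sqrt{d}$ and $\sqrt{2d-3}$ are on the right track here), and the best $N$-term estimate over product sets; no incremental summation of EXPAND contributions is needed, and the paper never reasons from support size back to $\mc A_s$-norm.

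A secondary structural difference: even if you could close your induction, the summation route introduces a dependence on the geometric ratio $\vartheta^{1/s}$ and requires additional care that $\Lambda^{(k,0)}\subset\Lambda^{(k,l)}$ is not double-counted across increments. The paper's direct bound on $\sum_j\#\Lambda_j^{(k,m+1)}$ in terms of $(\sqrt{1-\alpha^2}\|\bs r^{(k,m+1)}\|)^{-1/s}$ and the residual's $\mc A_s$-norm sidesteps both issues. Your identification of the problematic step is accurate; to repair the argument you should replace the support-to-$\mc A_s$ inference with the direct $\mc A_s$-propagation along PCG updates as in \cite[Prop.\ 6.7]{CDD1}, using the bound $I^*$ on the number of PCG iterations, which is exactly why \cref{thm:cg} is in the hypotheses.
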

\begin{proof}
On iteration $(k,m)$ we get the following.

1. Due to \cref{thm:cg}, we can ensure an upper bound on the number
of \textbf{PCG} iterations. Let $\bs r^i_{CG}$ denote the inner \textbf{PCG}
residual at \textbf{PCG} iteration $i$ and $\bs e^i_{CG}$ the
corresponding error. Then
\begin{align*}
    \|\bs r^i_{CG}\|\leq\lambda_{\max}^{1/2}\|\bs e^i_{CG}\|_A
                    \leq\lambda_{\max}^{1/2}\varrho^k\|\bs e^0_{CG}\|_A
                    \leq\kappa^{1/2}\varrho^k\|\bs r^{(k, m)}\|_A
                    \leq\omega_2\|\bs r^{(k, m)}\|_A.
\end{align*}
Thus, the number of \textbf{PCG} iterations is bounded by
\begin{align}\label{eq:pcgit}
    i\leq I^*:=\left\lceil
    \left|\frac{\ln(\omega_2\kappa^{-1/2})}{\ln(\varrho)}\right|
    \right\rceil.
\end{align}

2. Applying the same proof as in \cite[Prop.\ 6.7]{CDD1},
together with the results from \cite[Thm.\ 8]{DB} and \cref{eq:rhs}, we obtain
\begin{align*}
    \|\pi_j(\bs u^{(k, m+1)})\|_{\mc A_s}\leq C(I^*)
    \|\pi_j(\bs u^{(k, m)})\|_{\mc A_s},
\end{align*}
for $1\leq j\leq d$.

3. Applying once more \cite[Thm.\ 8]{DB}, \cref{eq:rhs} and the above
\begin{align*}
    \|\pi_j(\bs r^{(k, m+1)})\|_{\mc A_s}&\leq C(
    \|\pi_j(\bs f^\delta)\|_{\mc A_s}+
    \|\pi_j(\bs u^{(k, m+1)})\|_{\mc A_s}),\\
    &\leq\tilde{C}(
    \|\pi_j(\bs f^\delta)\|_{\mc A_s}+
    \|\pi_j(\bs u^{(k, m)})\|_{\mc A_s}),\\
    &\leq \bar{C}(
    \|\pi_j(\bs f^\delta)\|_{\mc A_s}+
    \|\pi_j(\bs u^{(k, 0)})\|_{\mc A_s}),\\
\end{align*}
for $1\leq j\leq d$ and a constant $\bar{C}>0$ independent of $k$ or $m$.

4. Let $\mc C(\bs v, N)$ denote the routine \textbf{COARSE} retaining
$N$ terms, i.e.,\\
$\sum_{j=1}^d\#\supp_j(\pi_j(\bs v))\leq N$. Let $\mc C^o(\bs v, N)$
denote the best $N$-term approximation over product sets, such that
$\sum_{j=1}^d\#\supp_j(\pi_j(\bs v))\leq N$.
For a given $\varepsilon>0$, take $N$ to be minimal such that
\begin{align*}
    \|\bs v-\mc C^o(\bs v, N)\|\leq\varepsilon.
\end{align*}
Then by property \cref{eq:contractqo}
\begin{align*}
    \|\bs v-\mc C(\bs v, N)\|\leq\sqrt{d}
    \|\bs v-\mc C^o(\bs v,N)\|\leq\varepsilon.
\end{align*}
Consequently
\begin{align*}
    \min\left\{N:\|\bs v-\mc C(\bs v, N)\|\leq\varepsilon\right\}
    \leq
    \min\left\{N:\|\bs v-\mc C^o(\bs v, N)\|\leq
    \frac{\varepsilon}{\sqrt{d}}\right\}.
\end{align*}

5. As shown in the proof of \cite[Thm.\ 7]{DB}, the best $N$-term approximation
over product sets satisfies the property
\begin{align*}
    \min\left\{N:\|\bs v-\mc C^o(\bs v, N)\|\leq
    \varepsilon\right\}
    \leq 2d\varepsilon^{-1/s}\left(\sum_{j=1}^d
    \|\pi_j(\bs v)\|_{\mc A_s}\right)^{1/s}.
\end{align*}

Combining 3.-5.\ with \cref{prop:complex} we get the desired claim
\begin{align*}
    \sum_{j=1}^d\#\Lambda_j^{(k, m+1)}&\leq
    C(\sqrt{1-\alpha^2}\|\bs r^{(k, m+1)}\|)^{-1/s}
    \left(\sum_{j=1}^d\|\pi_j(\bs f^\delta)\|_{\mc A_s}+
    \|\pi_j(\bs u^{(k,0)})\|_{\mc A_s}\right)^{1/s}\\
    &\leq\tilde{C}
    \|\bs u^\delta-\bs u^{(k, m+1)}\|^{-1/s}
    \left(\sum_{j=1}^d\|\pi_j(\bs u^\delta)\|_{\mc A_s}\right)^{1/s},
\end{align*}
with a constant $\tilde{C}>0$ independent of $k$ or $m$.
This completes the proof.
\end{proof}

The maximum wavelet level appearing in $\Lambda^{(k, m)}$ influences
the rank of the preconditioning $\bs S^{-1}(\delta, \eta)$. To show
quasi-optimality of all arising ranks, we require the following lemma.

\begin{lemma}\label{lemma:maxlevel}
    Let the assumptions of \cref{prop:optsupp} be satisfied for
    $0<s<s^*$.
    Additionally, assume
    the data $\bs f$ and operator $\bs A$
    have excess regularity for some $t>0$
    \begin{align}\label{eq:excess}
        \|\bs D^{-1+t}_j\pi_j(\bs f_\varepsilon)\|\lesssim
        \|\bs D^{-1+t}_j\pi_j(\bs f)\|<\infty
        \|\bs D^{-1+t}_j\bs A_j\|<\infty,
    \end{align}
    for any $1\leq j\leq d$ and any $\varepsilon>0$,
    where $\bs A_j$ is the one dimensional component
    of $\bs A$. Essentially \cref{eq:excess} requires the
    one dimensional components $f$ to have
    regularity $H^{-1+t}$ and the one dimensional
    wavelet basis to have regularity
    $H^{1+t}$, which in turn ensures a slightly faster decay of the wavelet
    coefficients.

    Then, on iteration $(k, m)$ the maximum level
    arising in $\Lambda^{(k, m)}$ can be bounded by
    \begin{align*}
        t^{-1}\log_2\left(C^{kM^*I^*+m}
        \|\bs u^\delta-\bs u^{(k, m)}\|^{-1-1/2s}
        \max_j\|\bs D_j^t\bs f^\delta\|
        \left(\sum_{j=1}^d\|\pi_j(\bs u^\delta)\|_{\mc A_s}\right)^{1/2s}
        \right),
    \end{align*}
    where $C>0$ is a constant independent of $k$ and $m$, $M^*$ and
    $I^*$ are defined in \cref{eq:M} and \cref{eq:pcgit} respectively.
\end{lemma}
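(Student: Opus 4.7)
The overall plan is to show that on every iteration the wavelet coefficients of the iterates $\bs u^{(k,m)}$, as well as of the residuals $\bs r^{(k,m)}$, inherit (with only a mildly amplified constant) the excess regularity \cref{eq:excess} from $\bs f^\delta$ and $\bs A^\delta$. Once this is established, the maximum level added by \textbf{EXPAND} can be bounded by the level at which coefficients of the residual fall below the bulk-chasing cut-off, and the desired logarithmic bound will follow from the exponential decay $\sim 2^{-t\ell}$ of wavelet coefficients with excess regularity $t$.

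The first step is to translate \cref{eq:excess} into a decay estimate: for a one-dimensional quantity $\bs g$ with $\|\bs D_j^{-1+t}\pi_j(\bs g)\|<\infty$, any coefficient at wavelet level $\ell$ is bounded by a constant multiple of $2^{-t\ell}\|\bs D_j^{-1+t}\pi_j(\bs g)\|$. Combined with the separable structure of $\bs A^\delta$ and the boundedness property \cref{eq:excess} for the one-dimensional $\bs A_j$, this immediately gives a bound of the form $\|\bs D_j^{-1+t}\pi_j(\bs A^\delta \bs v)\| \lesssim \|\bs D_j^{-1+t}\pi_j(\bs v)\|$ times the norm of the excess-regularity part of $\bs A_j$. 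An analogous estimate for $\bs f_\varepsilon^\delta$ is inherited from \cref{eq:excess}.

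Next I would propagate these excess-regularity norms through the algorithm. For each of the basic operations executed up to iteration $(k,m)$, namely a \textbf{PCG} step of \cref{pcgalg}, a tensor truncation $\mc T$, a coarsening $\mc C$, and the \textbf{EXPAND}/\textbf{RES} step, one shows that the excess-regularity norm of the iterate or residual is amplified by at most a fixed constant $C>0$ (the argument is the same template as for the $\mc A_s$-seminorm in the proof of \cref{prop:optsupp}, with $\mc A_s$ replaced by the weighted norm $\|\bs D_j^{-1+t}\pi_j(\cdot)\|$). Since by \cref{eq:pcgit} and \cref{eq:M} each outer iteration $k$ contains at most $M^*$ inner iterations, each of which performs at most $I^*$ \textbf{PCG} steps, the total number of basic operations up to $(k,m)$ is bounded by $kM^*I^*+m$, yielding an overall amplification of at most $C^{kM^*I^*+m}$ of the initial excess-regularity norm $\max_j\|\bs D_j^t\bs f^\delta\|$.

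Finally, the maximum level $\ell^\star$ added to $\Lambda^{(k,m)}$ is controlled by \textbf{EXPAND}. A new index at level $\ell$ is kept only if the corresponding contraction of the residual is at least of the order of the bulk cutoff, which in view of \cref{eq:bulk} is comparable to $(\sum_j\#\Lambda_j^{(k,m)})^{-s}\sum_j\|\pi_j(\bs r^{(k,m)})\|_{\mc A_s}$, and hence, using \cref{prop:optsupp} and $\|\bs r^{(k,m)}\|\sim\|\bs u^\delta-\bs u^{(k,m)}\|$, by a constant multiple of
\[
\|\bs u^\delta-\bs u^{(k,m)}\|^{1+1/(2s)}\Big(\textstyle\sum_{j}\|\pi_j(\bs u^\delta)\|_{\mc A_s}\Big)^{-1/(2s)}.
\]
On the other hand, by the preceding decay estimate the magnitude of any residual coefficient at level $\ell$ is at most $C^{kM^*I^*+m}\max_j\|\bs D_j^t\bs f^\delta\|\,2^{-t\ell}$. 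Comparing the two and solving for $\ell$ gives precisely the claimed bound. The main obstacle is the bookkeeping in the second step, i.e., verifying that truncation and coarsening (which do not preserve the wavelet structure in an obvious way) only multiply the excess-regularity norm by a constant; this is handled by combining the HOSVD quasi-optimality \cref{eq:hosvd} with the pointwise bound on contractions in \cref{eq:contractqo}, exactly as in the $\mc A_s$-stability argument already used in \cref{prop:complex}.
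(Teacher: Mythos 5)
Your sketch follows the same two-pronged strategy the paper uses: (i) propagate the excess-regularity weighted norm $\|\bs D_j^{t}\pi_j(\cdot)\|$ through the algorithm to obtain the amplification $C^{kM^*I^*+m}\max_j\|\bs D_j^t\bs f^\delta\|$ (the paper realizes this via the polynomial representation of \cref{lemma:polycg} together with \cite[Prop.~39]{MarkusSobolev}, whereas you phrase it as a per-operation constant amplification, which amounts to the same bound), and (ii) combine the resulting $2^{-t\ell}$ coefficient decay with the support-size estimate from \cref{prop:optsupp} to solve for the maximal level, which is exactly what the paper delegates to \cite[Lemma~37]{MarkusSobolev}. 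The exponent bookkeeping ($-1-1/(2s)$ on the error and $1/(2s)$ on the $\mc A_s$-quantity) is consistent with the threshold $\|\bs r\|/\sqrt{N}$ argument you outline, so this is a faithful reconstruction of the paper's proof rather than a genuinely different route.
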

\begin{proof}
    We want to apply \cite[Lemma 37]{MarkusSobolev}, i.e., the maximum level
    depends on the decay of the wavelet coefficients and the size of the
    tensor.
    To this end, note that
    $\Lambda^{(k, 0)}$ is obtained by coarsening $\bs u^{(k-1, m)}$ for an $m$
    that satisfies line \ref{iftol} of \cref{htawgm}. Thus, we need
    to estimate $\|\bs D_j^t\bs u^{(k-1, m)}\|$ and
    the support size of $\bs u^{(k-1, m)}$. For the latter we apply
    \cref{prop:optsupp}.

    For the former we can apply \cite[Prop.\ 39]{MarkusSobolev}
    together with assumption \cref{eq:excess},
    since $\bs u^{(k-1, m)}$
    is a polynomial in $\bs f^\delta$ (cf.\ \cref{lemma:polycg})
    and excess regularity
    is stable under truncation or coarsening. This gives the desired claim
    for $\Lambda^{(k, 0)}$.

    The set $\Lambda^{(k, m)}$, $m>1$, is obtained by coarsening the approximate
    residual $\bs r^{(k, m)}$. Thus, as above we need to estimate
    $\|\bs D_j^t\bs r^{(k, m)}\|$ and
    the support size of $\bs r^{(k, m)}$. To this end, note that the
    approximate residual is of the form
    \begin{align*}
        \bs r^{(k, m)}=\bs S^{-1}(\delta, \eta_k)
        (\bs f_{\varepsilon_k}-\bs A_{\varepsilon_k} \bs S^{-1}(\delta,\eta_k)
        \bs u^{(k, m)}),
    \end{align*}
    for $\varepsilon_k$ and $\eta_k$ chosen according to \cref{lemma:res}.
    Applying assumption \cref{eq:excess} and \cite[Prop.\ 39]{MarkusSobolev}
    to $\bs u^{(k, m)}$, we get
    \begin{align*}
        \|\bs D_j^t\bs r^{(k, m)}\|\leq C^{kM^*I^*+m}
        \|\bs D^{t}_j\bs f^\delta\|,
    \end{align*}
    for $C>0$ independent of $k$ or $m$.

    For the support size of $\bs r^{(k, m)}$ we apply \cref{eq:rhs},
    the compressibility of $\bs A$ together with \cite[Thm.\ 8]{DB} and
    \cref{prop:optsupp}. This gives
    \begin{align*}
        \sum_{j=1}^d\pi_j(\bs r^{(k, m)})\leq C
        \|\bs u^\delta-\bs u^{(k, m)}\|^{-1/s}\left(
        \sum_{j=1}^d\|\pi_j(\bs u^\delta)\|_{\mc A_s}\right)^{1/s},
    \end{align*}
    and the desired claim follows by an application of
    \cite[Lemma 37]{MarkusSobolev}.
\end{proof}

Finally, we demonstrate quasi-optimality of all intermediate ranks.
In the following
$r(\bs A)$ and $r(\bs f)$ denote
the (finite) ranks of the non-preconditioned operator and right hand side.

\begin{proposition}\label{prop:ranks}
    Let the assumptions of \cref{prop:optsupp} and \cref{lemma:maxlevel}
    hold. Let $I^*$ from \cref{eq:pcgit}
    denote the bound on the number of \textbf{PCG} iterations.
    Then, we can bound the ranks of the
    arising intermediate iterates as
    \begin{align*}
        |r(\bs u^{(k, m)})|_\infty\leq &C|r(\bs A)|_\infty^{mI^*}
        \left[1+|\ln(\|\bs u^\delta-\bs u^{(k, m)}\|)|+
        \ln\left(\sum_{j=1}^d\|\pi_j(\bs u^\delta)\|_{\mc A_s}\right)
        \right]^{2mI^*}\times\\
        &\times\left[\gamma^{-1}\left(C
        \frac{\|\bs u^\delta\|_{\mc A(\gamma)}}
        {\|\bs u^\delta-\bs u^{(k, m)}\|}\right)+
        |r(\bs f)|_\infty\right]
        =:\hat{r},
    \end{align*}
    for a constant $C>0$ independent of $k$ or $m$.
\end{proposition}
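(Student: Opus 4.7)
The plan is to induct on $m$ for each fixed outer index $k$, with the base case $m=0$ delivered by \cref{prop:complex}. Combining $|r(\bs u^{(k,0)})|_\infty\leq\gamma^{-1}(C_0(\omega_3+\omega_4+\omega_5)^{-k}\|\bs u^\delta\|_{\mc A(\gamma)})$ with the outer error decay $\|\bs u^\delta-\bs u^{(k,0)}\|\lesssim\lambda_{\min}^{-1}\omega_0(\omega_3+\omega_4+\omega_5)^k$ from \cref{eq:outerred} and the monotonicity of $\gamma^{-1}$ reproduces the required form $\gamma^{-1}(C\|\bs u^\delta\|_{\mc A(\gamma)}/\|\bs u^\delta-\bs u^{(k,0)}\|)$; the prefactor $|r(\bs A)|_\infty^{0}[\cdots]^{0}=1$ closes the base case.

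For the inductive step $m\to m+1$, I would analyze the single \textbf{PCG} call, which by \cref{eq:pcgit} performs at most $I^*$ truncated CG iterations. By \cref{lemma:polycg} each iterate is a polynomial action of $\bs A^\delta=\bs S^{-1}(\delta,\eta)\bs A_\varepsilon\bs S^{-1}(\delta,\eta)$ on the previous iterate, plus additive contributions from $\bs f^\delta$ and from the truncation perturbations. Ranks therefore multiply by $|r(\bs A)|_\infty\cdot|r(\bs S^{-1}(\delta,\eta))|_\infty^{2}$ per operator application and pick up $|r(\bs f)|_\infty$ per residual update. Following \cite{MarkusSobolev}, the rank of $\bs S^{-1}(\delta,\eta)$ scales like $n+n^++1$, which depends logarithmically on $\delta$, $\eta$ and the scaling cutoff $T$. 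The cutoff $T$ is driven by the maximum wavelet level in $\Lambda^{(k,m)}$, which by \cref{lemma:maxlevel} is logarithmic in $\|\bs u^\delta-\bs u^{(k,m)}\|^{-1}$ and in $\sum_{j}\|\pi_j(\bs u^\delta)\|_{\mc A_s}$; squaring for the two sandwiching preconditioner applications yields the exponent $2$ on the square bracket.

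The truncation $\mc T$ after each operator evaluation keeps the ranks at their multiplicative growth ceiling. The tolerance used in Tobler's \textbf{PCG}, proportional to $\alpha_k\|d^{(k)}\|$ (see \cref{remark:tols}), is commensurate with the target accuracy $\|\bs u^\delta-\bs u^{(k,m+1)}\|$, so that by the HOSVD quasi-optimality \cref{eq:hosvd} and the definition of $\mc A(\gamma)$ the truncated iterate carries rank at most $\gamma^{-1}(C\|\bs u^\delta\|_{\mc A(\gamma)}/\|\bs u^\delta-\bs u^{(k,m+1)}\|)$, with the admissibility $\rho(\gamma)<\infty$ absorbing the factor $\sqrt{2d-3}$ into $C$. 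Composing the $I^*$ CG steps within one \textbf{PCG} call raises the multiplicative and logarithmic factors to the $I^*$ and $2I^*$ powers; composing over the $m$ outer iterations produces the claimed exponents $mI^*$ and $2mI^*$.

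The main obstacle will be the bookkeeping of how the additive contributions from $\bs f^\delta$ and the perturbation sequences $\eps^{(k)},\epss^{(k)}$ appearing in \cref{lemma:polycg} interact with multiplicative rank growth through the nonlinear operator $\mc T$, and in particular justifying that the \textbf{PCG} truncation tolerance is tight enough to keep the iterate inside the $\mc A(\gamma)$ manifold at the relevant accuracy while remaining loose enough to preserve the logarithmic dependence on the error. This parallels the analysis underlying \cite[Thm.\ 7]{DB} used in \cref{prop:complex}, and I would adapt those techniques using the contraction estimate of \cref{thm:cg} in place of the exact-CG convergence bound.
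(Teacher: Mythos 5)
Your overall strategy matches the paper's: the published proof is essentially the three citations you invoke --- \cref{lemma:maxlevel} together with \cite[Thm.~34]{MarkusSobolev} for $|r(\bs S^{-1}(\delta,\eta_{k,m}))|_\infty$, \cref{thm:conv} to absorb the explicit $k$-dependence into $|\ln\|\bs u^\delta-\bs u^{(k,m)}\||$, and \cref{lemma:polycg} combined with \cref{prop:complex} to propagate the base rank of $\bs u^{(k,0)}$ through $mI^*$ applications of $\bs A^\delta$. Your base case and the identification of the exponents $mI^*$ and $2mI^*$ are all in line with the paper.

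There is, however, one step in your inductive argument that is both unnecessary and, as stated, wrong: the claim that the truncations \emph{inside} \textbf{PCG} restore the iterate to rank $\gamma^{-1}\bigl(C\|\bs u^\delta\|_{\mc A(\gamma)}/\|\bs u^\delta-\bs u^{(k,m+1)}\|\bigr)$ via the HOSVD quasi-optimality \cref{eq:hosvd}. For that mechanism to work you would need the truncation tolerance to exceed the current error against $\bs u^\delta$, so that a low-rank competitor drawn from $\mc A(\gamma)$ is admissible for the truncated tensor. But \cref{thm:cg} requires $\|\epss^{(i+1)}\|\leq\theta\mu\lambda_{\max}^{-1}\|r^{(i)}\|$, i.e.\ the PCG truncation tolerance is a strict fraction of the current residual, hence \emph{smaller} than the error; the HOSVD bound then only compares against best approximations of the current iterate itself, not of $\bs u^\delta$, and gives no recompression to $\gamma^{-1}(\cdots)$. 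If that recompression did hold inside the inner loop, it would moreover render the factors $|r(\bs A)|_\infty^{mI^*}$ and $[\cdots]^{2mI^*}$ superfluous --- the statement of the proposition itself tells you the rank is \emph{not} reset during the inner sweeps. The $\gamma^{-1}(\cdots)$ term enters only once, from the rank of $\bs u^{(k,0)}$ established in \cref{prop:complex} via the outer re-truncation in line~\ref{htawgm:trunc} (which \emph{does} use a tolerance above the error), and from there the rank grows purely multiplicatively through the polynomial representation. With this corrected, your worry in the final paragraph about the perturbation terms is handled without extra machinery: $\epss^{(j)}$ is the difference between a tensor and its HOSVD truncation, so $|r(\epss^{(j)})|_\infty$ is bounded by (twice) the rank of the untruncated iterate, and these contributions pass through at most the same $mI^*$ further applications of $\bs A^\delta$ as the main term, so they are absorbed into the constant $C$ --- this is the implicit content of the paper's ``together with \cref{prop:complex} we get the desired claim.''
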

\begin{proof}
    Applying \cref{lemma:maxlevel} and \cite[Theorem 34]{MarkusSobolev}
    we get for the rank of the preconditioner at step $(k, m)$
    \begin{align*}
        |r(\bs S^{-1}(\delta, \eta_{k, m}))|_\infty
        \leq C\left(1+|\ln(\|\bs u^\delta-\bs u^{(k, m)}\|)|+k
        +\ln\left(\sum_{j=1}^d\|\pi_j(\bs u^\delta)\|_{\mc A_s}\right)\right).
    \end{align*}
    Using \cref{thm:conv}, $k$ can be bounded by
    $1+|\ln(\|\bs u^\delta-\bs u^{(k, m)}\|)|$. Finally, since $\bs u^{(k, m)}$
    is a polynomial in $\bs f^\delta$ and $\bs u^{(k, 0)}$
    (cf.\ \cref{lemma:polycg}) and together with \cref{prop:complex}
    we get the desired claim.
\end{proof}

\begin{corollary}\label{cor:complex}
    Under the assumptions of \cref{prop:ranks} the number of
    operations to produce the iterate $\bs u^{(k, m)}$ can be bounded as
    \begin{align*}
        \mc O\left(\left[1+\left|
         \ln(\bs\varepsilon^{(k, m)})\right|\right]^{8(M^*+1)I^*}
        \left[1+\gamma^{-1}\left(C
        (\bs\varepsilon^{(k, m)})^{-1}\right)\right]^{4(M^*+1)I^*}
        (\bs\varepsilon^{(k, m)})^{-1/s}\right),
    \end{align*}
    where $\bs \varepsilon^{(k, m)}:=\|\bs u^\delta-\bs u^{(k, m)}\|$
    and $C>0$ is independent of $\bs \varepsilon^{(k, m)}$.
\end{corollary}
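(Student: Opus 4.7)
The plan is to bound the per-iteration computational cost of \cref{htawgm} using the rank bound from \cref{prop:ranks}, the support-size bound from \cref{prop:optsupp}, and then aggregate across all PCG, inner, and outer iterations needed to reach the iterate $\bs u^{(k,m)}$.

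First I would identify the dominating cost per step. Every inner \textbf{PCG} iteration performs a separable-operator application of $\bs A^\delta$ (which reduces to low-dimensional actions on the leaves of the HT tree, cf.\ \cite[Ch.~6]{KestlerPhd}), a preconditioner application $\bs S^{-1}(\delta,\eta)$, a residual evaluation, and — crucially — a tensor truncation in lines~\ref{desc:dk} and~\ref{desc:xk} of \cref{descalg}. By \S\ref{sec:tensors}, an HOSVD truncation costs $\mathcal{O}(d\,r^4 + r^2\sum_{j=1}^d n_j)$ where $r=|r(\cdot)|_\infty$ and $n_j$ is the $j$-th active set size; all other sub-routines are polynomial in the same two quantities and are dominated by the truncation cost.

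Second, I would plug in the estimates on ranks and supports. \cref{prop:optsupp} gives $\sum_j \#\Lambda_j^{(k,m)} \lesssim (\bs\varepsilon^{(k,m)})^{-1/s}$, and \cref{prop:ranks} yields $|r(\bs u^{(k,m)})|_\infty \leq \hat r$ with the polynomial-times-$\gamma^{-1}$ structure already written out. The rank of the separable preconditioner, controlled by \cref{lemma:maxlevel} together with \cite[Thm.~34]{MarkusSobolev}, is absorbed by the same $\hat r$. Thus the cost of a single PCG step at iteration $(k,m)$ is at most
\begin{equation*}
\mathcal{O}\!\left(d\,\hat r^{\,4} + \hat r^{\,2}\,(\bs\varepsilon^{(k,m)})^{-1/s}\right).
\end{equation*}

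Third, I would aggregate. By \cref{eq:pcgit}, each call to \textbf{PCG} executes at most $I^*$ iterations; by \cref{eq:M}, at most $M^*+1$ inner steps per outer iteration $k$; and by \cref{eq:K}, the outer count is $K^* \lesssim \ln(1/\bs\varepsilon^{(k,m)})$. Because errors at all previous iterations are larger than $\bs\varepsilon^{(k,m)}$, the rank and support bounds at those earlier stages are no worse than at step $(k,m)$, so the total cost is dominated by $(K^*(M^*+1)I^*)$ times the per-step bound above, with the polynomial factor in $K^*$ absorbed into the logarithmic factor already present in $\hat r$.

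The main obstacle is the bookkeeping of exponents so that the final bound has the stated shape. The log-polynomial factor in $\hat r$ carries an exponent $2mI^*$, which after raising to $\hat r^4$ produces $\left[1+|\ln(\bs\varepsilon^{(k,m)})|\right]^{8mI^*}$; bounding $m\leq M^*+1$ yields the stated exponent $8(M^*+1)I^*$ on the logarithm. The $\gamma^{-1}$ contribution enters $\hat r$ linearly and would yield power $4$ after $\hat r^{\,4}$; using $\gamma^{-1}(C/\bs\varepsilon^{(k,m)})\geq 1$ for the relevant regime allows us to inflate this to the stated power $4(M^*+1)I^*$, which simultaneously absorbs the $\hat r^2 (\bs\varepsilon^{(k,m)})^{-1/s}$ term (the support-size factor contributing the final $(\bs\varepsilon^{(k,m)})^{-1/s}$). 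Verifying that this uniform inflation of exponents indeed dominates the sum $\hat r^4 + \hat r^2(\bs\varepsilon^{(k,m)})^{-1/s}$ and the aggregated cost across all prior iterations — rather than incurring extra factors — is the delicate part of the argument.
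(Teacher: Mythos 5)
Your proposal is correct and follows essentially the same line as the paper's (very terse) proof: identify tensor truncation as the dominant cost, insert the rank bound from \cref{prop:ranks} and the support bound from \cref{prop:optsupp} into the HOSVD work estimate, and aggregate across the $K^*(M^*+1)I^*$ iterations needed to reach $\bs u^{(k,m)}$. You go somewhat further than the paper in making the bookkeeping explicit: you correctly note that the logarithmic factor in $\hat r$ carries exponent $2mI^*$, that raising it to the fourth power and multiplying by the additional $K^*\lesssim\log(1/\bs\varepsilon^{(k,m)})$ from the iteration count is absorbed into $8(M^*+1)I^*$, and — most importantly — you explicitly flag that the $\gamma^{-1}$ factor appears only \emph{linearly} in $\hat r$, so $\hat r^4$ alone yields only a fourth power, and the stated exponent $4(M^*+1)I^*$ requires the crude inflation $x\leq x^k$ for $x\geq 1,\ k\geq 1$. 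The paper's proof ("Application of \cref{prop:ranks} yields the desired claim") leaves both this inflation and the aggregation over prior iterations implicit, so your reconstruction is faithful but more transparent; your acknowledgment that verifying the uniform exponent inflation is "the delicate part" is exactly the right place to flag the gap that the published proof glosses over.
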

\begin{proof}
    The dominant part for the complexity estimate is truncation.
    For a finite tensor $\bs v$ the work for truncating is bounded by
    \begin{align*}
        d|r(\bs v)|^4_\infty+|r(\bs v)|^4_\infty\sum_{j=1}^d\#\pi_j(\bs v)
    \end{align*}
    Application of \cref{prop:ranks} yields the desired claim.
\end{proof}

\begin{remark}\label{rem:complex}
    A few remarks on \cref{cor:complex} are in order.
    \begin{enumerate}
        \item The factor
        $\varepsilon^{-1/s}$ is the work related to the approximation of the frames
        of $\bs u^\delta$. It does not dominate the
        complexity estimate.
        \item
        The factor
        $\gamma^{-1} (C\frac{\|\bs u^\delta\|_{\mc A(\gamma)}}{\varepsilon})$
        reflects the low rank approximability of $\bs u^\delta$.
        Unlike in standard AWGM
        methods, due to the heavy reliance on truncation techniques to keep ranks
        small, we can not
        expect the dependence on this factor to be linear but rather algebraic at
        best. To achieve linear complexity, if at all possible, would require
        a fundamentally different approach to approximate $u$.
        \item
        The dimension dependence on $d\gg 1$ is
        hidden in the constants and the rank growth factor
        $\gamma^{-1}(C\frac{\|\bs u^\delta\|_{\mc A(\gamma)}}{\varepsilon})$.
        In particular, approximability of $\bs f$,
        $\bs A$, $\bs u^\delta$ and the behavior
        of $\kappa(\bs A^\delta)$ determine the overall
        amount of work w.r.t.\ $d$. E.g., in \cite[Thm.\ 26]{MarkusSobolev}, the
        authors assume $\gamma$ to be exponential in the rank $r$ and independent
        of $d$; the sparsity of frames of $\bs f$ to be independent of
        $d$ and the overall support size of $\bs f$ to grow at most linearly in
        $d$; the excess regularity to be $t$,
        $\kappa(\bs A^\delta)$ and the ranks of
        $\bs A$
        to be independent of $d$; the number of operations to compute
        $\bs f_\varepsilon$ to grow at most polynomially in $d$. With these
        assumptions, the authors show the number of required operations
        to compute $\bs u_\varepsilon$ to grow at most as
        $d^{C\ln(d)}|\ln(\varepsilon)|^{C\ln(d)}$ w.r.t.\ $d$. Here, $\ln(d)$ stems
        from the fact that the quasi-optimality of truncation and coarsening
        depends on $d$.
    \end{enumerate}
\end{remark}

\subsection{Discussion}\label{sec:discussion}
For a long time the question of optimality for classical adaptive methods
remained open. In particular, it was unclear if adaptive algorithms
recovered the minimal index set (of wavelets or finite elements) required for
the current error, up to a constant. In \cite{CDD1} the authors showed for
an elliptic problem solved via an adaptive wavelet Galerkin
routine that indeed optimality can be achieved.
Crucial for optimality was a re-coarsening step, as in line
\ref{htawgm:coarse} of
\Cref{htawgm}.
In \cite{GHS} it was shown that optimality can be attained without
a re-coarsening step by a careful choice of the bulk chasing
parameter $\alpha$. In \cite{OptFEM} the
results were extended to finite elements.

It was thus of interest for us to investigate
if we can ensure index set optimality without
the re-coarsening step in line \ref{htawgm:coarse} of
\Cref{htawgm}. By ``optimality'' we refer to the optimal
\emph{product} index set.

In short, this fails for the
current form of the algorithm. We briefly elaborate on the issue.

\subsubsection*{I}
On one hand, the choice of the bulk chasing parameter $0<\alpha<1$
is a delicate balance between optimality and convergence. In \cite{GHS} it
was shown that $\alpha<\kappa(\bs A)^{-1/2}$ ensures optimality, while
any choice $\alpha>0$ ensures convergence.

On the other hand, by the nature of high dimensional problems, if we want
to avoid exponential scaling in $d$, we have to consider each $\bs\Lambda_j$
in the product $\bs\Lambda_1\times\cdots\times\bs\Lambda_d$
separately. This leads to the necessity of aggregating information, as is
done via the contractions in \cref{eq:contract}. Such aggregation means we
can estimate magnitudes at best only up to a dimension dependent constant.
Specifically, $\sqrt{d}$ in \cref{eq:contractqo}.

Thus, for a given $\alpha>0$, computing the minimal index set would be of
exponential complexity. Computing the minimal index set via contractions for
a given $\alpha$, we
can show that the resulting set is optimal for an adjusted value of
\begin{align*}
    \tilde{\alpha}:=\sqrt{\frac{\alpha^2+d-1}{d}}.
\end{align*}
For $d>1$ this value is too close to $1$ and cannot additionally satisfy
$\tilde{\alpha}<\kappa(\bs A)^{-1/2}$ for realistic values of $\kappa(\bs A)$.

From a different perspective, suppose we use contractions to determine the
index set in the first dimension only and then iterate this procedure
over all dimensions. Choosing $\alpha<\kappa(\bs A)^{-1/2}$ ensures the
optimality of the resulting index sets. However, the final relative error
is bounded by $\sqrt{d(1-\alpha^2)}$. Hence, for realistic $\kappa(\bs A)$,
we loose convergence. The range of values for optimality and convergence
do not intersect since the additional constant $\sqrt{d}$ is larger than $1$.
This mismatch lies in the heart of the issue.

\subsubsection*{II}
Nonetheless, numerically it has been observed that the cardinality of the
index sets generated using contractions is close to optimal. Thus, we take
a closer look at the ratio between the two index sets. More formally, for
a tensor $\bs v\in\ell_2(\mc J_1\times\cdots\times\mc J_d)$ and a constant
$0<\alpha<1$, define
\begin{align*}
    NE(\alpha, \bs v)&:=\min\bigg\{N\in\N: \forall j\;
    \bs\Lambda_j\subset\mc J_j,\;
    \sum_{j=1}^d\#\bs\Lambda_j\leq N,\;
    \|R_{\bs\Lambda_1\times\cdots\times\bs\Lambda_d}\bs v\|
    \geq\alpha\|\bs v\|\bigg\},\\
    NQ(\alpha, \bs v)&:=\min\bigg\{N\in\N:\forall j\;
    \bs\Lambda_j\subset\mc J_j,\;
    \sum_{j=1}^d\#\bs\Lambda_j\leq N,\;
    \mu(\bs v, N)\leq\sqrt{1-\alpha^2}\|\bs v\|\bigg\}.
\end{align*}
where
\begin{align*}
    \mu^2(\bs v, N)&=\sum_{j=1}^d\sum_{\lambda_j\in\bs\Lambda_j}
    |\pi_j(\bs v)[\lambda]|^2,\\
    \bs\Lambda_j\text{ minimal s.t.\ }\sum_{j=1}^d\#\bs\Lambda_j &\leq N.
\end{align*}
We consider the ratio $\frac{NQ(\alpha, \bs v)}{NE(\alpha, \bs v)}$.

Suppose $\bs v$ is a finitely supported tensor with
$M:=\sum_{j=1}^d\#\supp_j(\bs v)$. For the number of discarded terms one can
show
\begin{align*}
    M-NE(\alpha, \bs v)\leq\vartheta(\bs v, \alpha, d)(M-NQ(\alpha, \bs v)+1)-1.
\end{align*}
where the constant $\vartheta$ can be bounded as
\begin{align*}
    d^{1/d}\leq\vartheta(\bs v, \alpha, d)\leq d.
\end{align*}
In order to estimate the desired ratio we would have to assume
\begin{align}\label{eq:ratioass}
    \frac{M-NQ}{M}\leq\frac{1-\frac{\vartheta-1}{M}-c}{\vartheta-c},
\end{align}
for some constant $0<c<1$. In this case we would get
\begin{align*}
    \frac{NQ}{NE}\leq\frac{1}{c}.
\end{align*}
Unfortunately, we were not able to derive satisfactory rigorous assumptions,
under which \cref{eq:ratioass} holds.

\bigskip

One can also
derive the following bounds for a candidate constant $C$ independent of $\bs v$
\begin{align}\label{eq:optc}
    C_{\text{mean}}\frac{d-1+\alpha^2}{d\alpha^{2/d}}
    \leq C\leq
    C_{\text{mean}}\frac{d-1+\alpha^2}{d\alpha^{2}}
\end{align}
The constant $C_{\text{mean}}$ behaves like the ratio between arithmetic and
geometric means of $\#\mc J_j$.

We performed numerical experiments for $d=2,3,4$ for tensors of different
sizes, varying the parameter $\alpha$. We considered both random tensors and
tensors with different structures replicating the form of a residual tensor.
In all test cases the bound\footnote{For most test cases the lower
bound was satisfied.} \cref{eq:optc} was satisfied.
Particularly for random
tensors, the lower bound is sharp, while for tensors with a ``residual like''
structure the bound seems overly pessimistic.

\subsubsection*{III}
Despite evidence suggesting otherwise, the statement $NQ/NE\lesssim 1$ is not
true in general. A simple counter example is a sequence of
diagonal tensors with a fixed norm, where most of the norm is contained in the
first few entries while the size of the tensor (and the number of non-zero
entries) grows.

One could consider an improvement on $NQ$ by adjusting the definition as
\begin{align*}
    NQ(\alpha, \bs v)&:=\min\bigg\{N\in\N:\forall j\;
    \bs\Lambda_j\subset\mc J_j,\;
    \sum_{j=1}^d\#\bs\Lambda_j\leq N,\\
    &\quad\quad\quad\quad
    \|\bs v-\mc C(\bs v, N)\|\leq\sqrt{1-\alpha^2}\|\bs v\|\bigg\}.
\end{align*}
This results in an additional complexity factor of $\log_2(N)$ which, however,
does not dominate the overall complexity. Although this does reduce $NQ/NE$,
the same counter example applies in this case as well. It is not clear to us
if and how we can rigorously avoid such pathological cases.

\subsubsection*{IV}
Last but not least, we would like to remark that avoiding the
re-coarsening step in line \ref{htawgm:coarse} is meaningful only if we can
avoid the re-truncation step in line \ref{htawgm:trunc} as well. At this
point the Galerkin step can not be viewed as a projection on a fixed manifold.
We envision a version of \textbf{HT-AWGM} where we extend and fix the tensor
tree adaptively, similar to the index set. However, even in this case the ideal
Galerkin step is a projection onto a non-linear manifold. Showing optimality
here without re-truncation would require a different approach than in the case
of index set optimality. We defer the analysis and implementation of such an
algorithm to future work.

\section{Numerical Experiments}\label{sec:numexp}

In this section, we test our implementation of \textbf{HT-AWGM}
analyzed in the previous section.
In particular, we are interested in the behavior of ranks and the
discretization.
We choose a simple model problem and
vary the dimension $d$.
We consider $-\Delta u= 1$ in $\Omega:=(0,1)^d$, $u=0$ on $\partial\Omega$
in its variational formulation of finding $u\in H_0^1(\Omega)$
such that $a(u, v):=\int_{\Omega}\innerp{\nabla u(x),\nabla v(x)}dx=1(v)$
for all $v\in H_0^1(\Omega)$.
The corresponding operator is given by $A:H_0^1(\Omega)\rightarrow H^{-1}(\Omega)$,
where $A(u):=a(u,\cdot)$, which is boundedly invertible and self-adjoint.

For the discretization we use tensor products of $L_2$-orthonormal
piecewise polynomial cubic B-spline multiwavelets.
We use our own implementation of an HTucker library. All of the software is
implemented in C++. For more details see, e.g.\ \cite{RuppPhd}.
We set the HT tree to be a perfectly balanced binary tree.
We vary the dimension as $d=2, 4, 8, 16, 32$.

\begin{figure}[ht!]
    \begin{subfigure}{0.47\textwidth}
    \includegraphics[height=42mm]{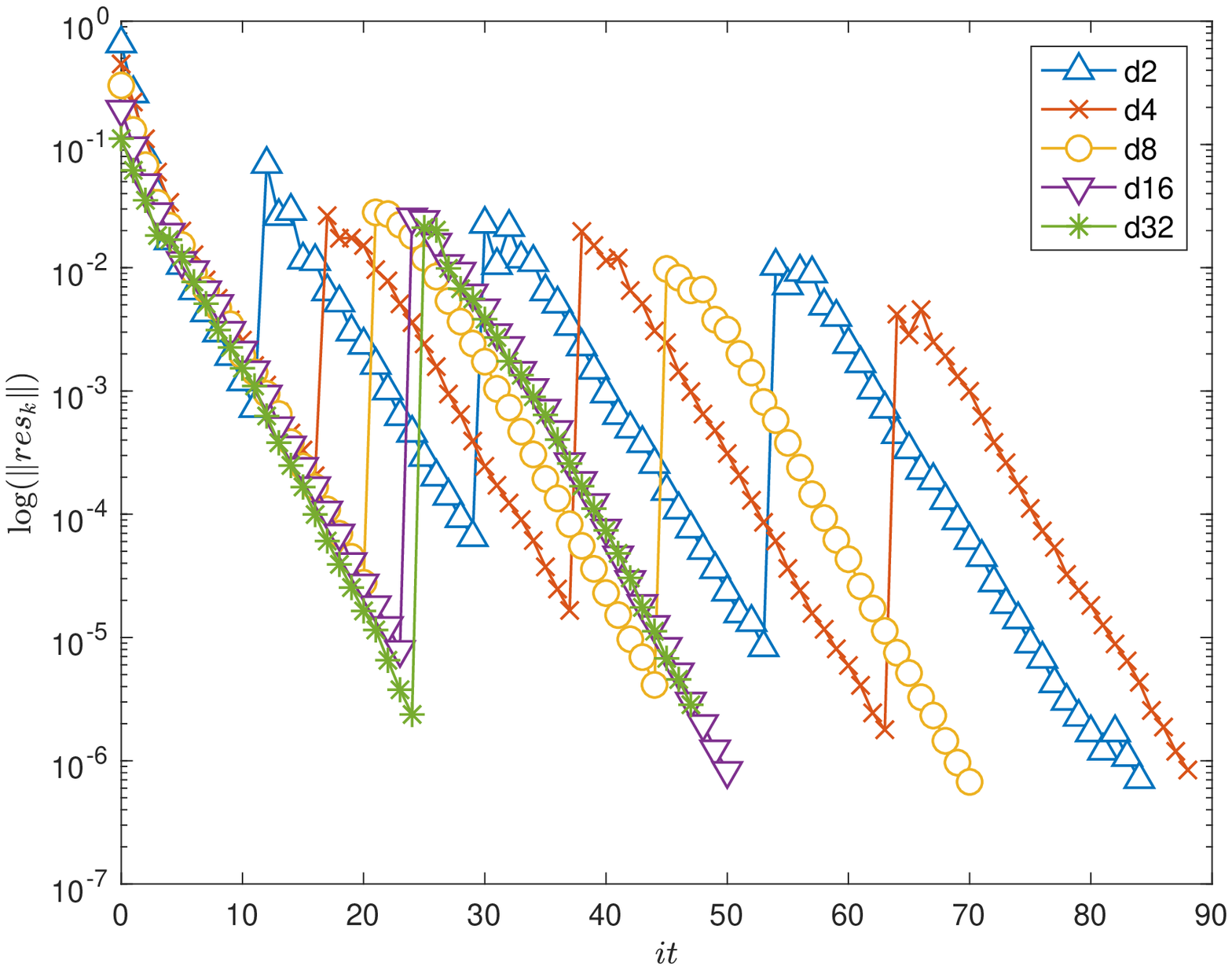}
    \caption{Residual per iteration.\label{fig:its}}
    \end{subfigure}
    \hfill
    \begin{subfigure}{0.47\textwidth}
        \includegraphics[height=42mm]{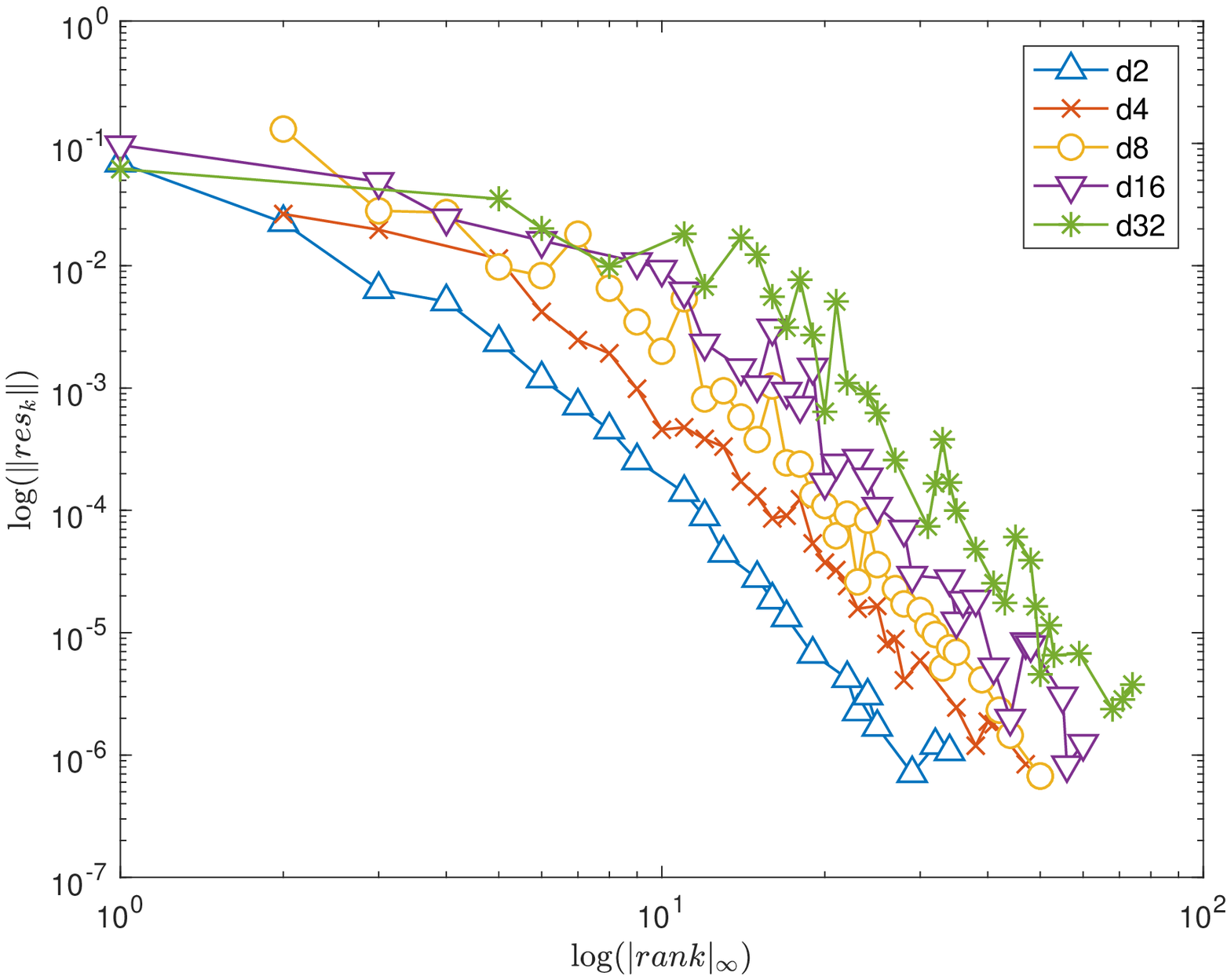}
        \caption{Residual vs.\ ranks.\label{fig:ranks}}
    \end{subfigure}


    \begin{subfigure}{0.47\textwidth}
        \includegraphics[height=42mm]{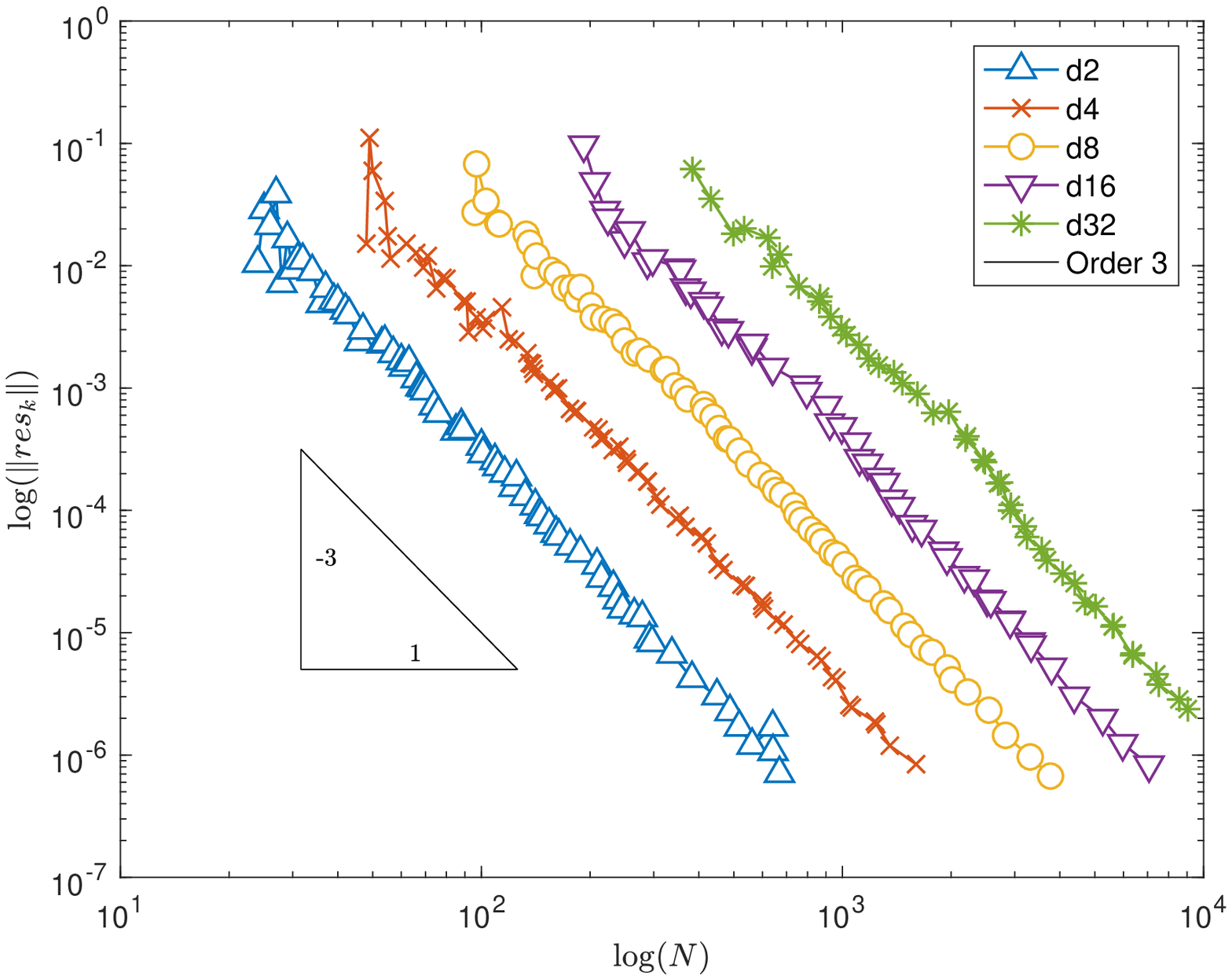}
        \caption{Residual vs.\ size of support of frames.\label{fig:frames}}
    \end{subfigure}
    \hfill
    \begin{subfigure}{0.47\textwidth}
        \includegraphics[height=42mm]{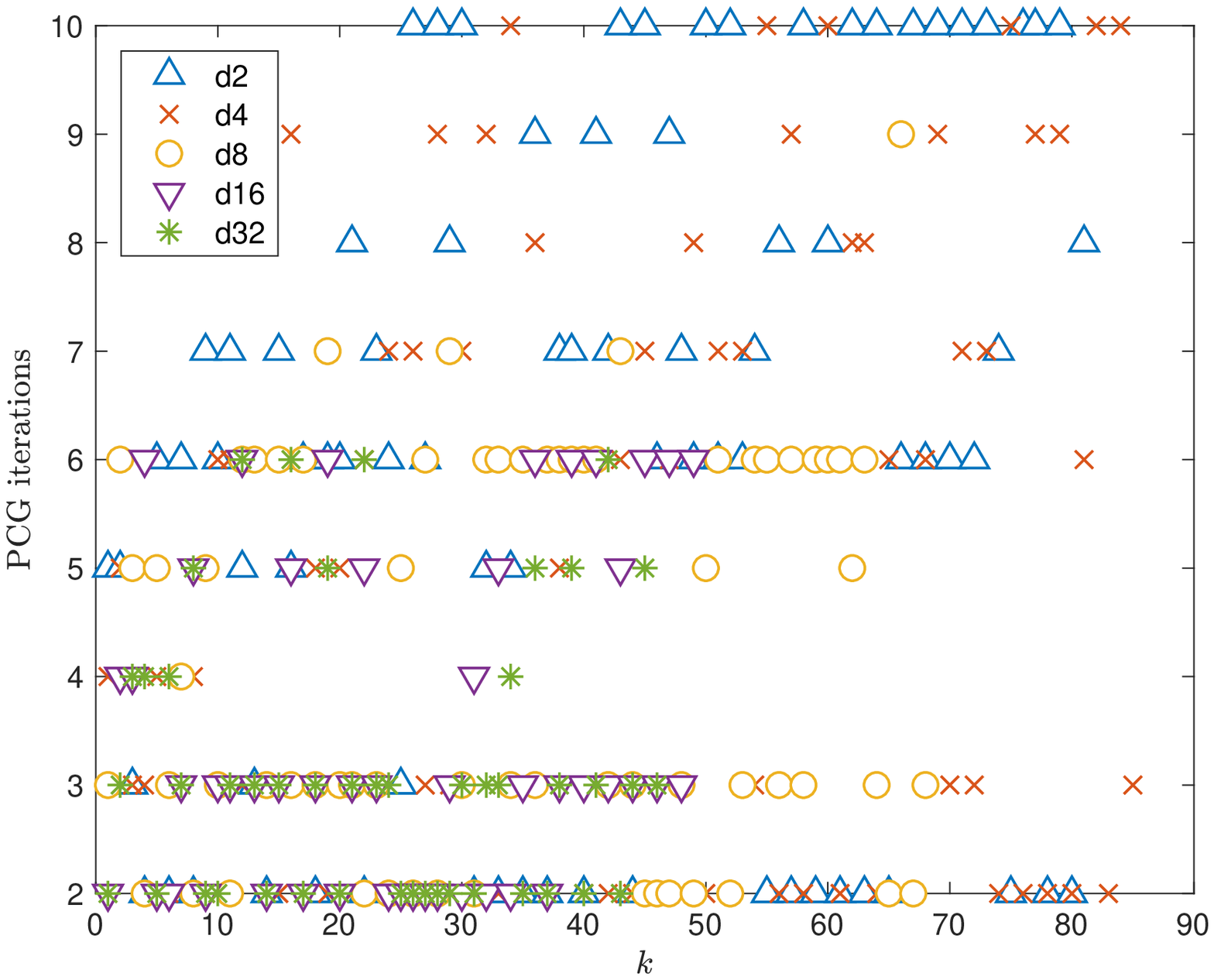}
        \caption{\# \textbf{PCG} iterations per \textbf{HT-AWGM}
                iteration.\label{fig:pcg}}
    \end{subfigure}
    \caption{\textbf{HT-AWGM} for different dimensions $d$.}
    \label{fig:complex}
\end{figure}

\subsubsection*{Results}
In \Cref{fig:its}, we display the convergence history with respect to the
number of overall iterations.
Due to the structure of the linear operator $A$, the
condition number $\kappa(\bs A^\delta)$ is independent of $d$.
Moreover, the parameters $\alpha, \omega_1, \omega_2 \in(0,1)$
are chosen the same for all dimensions.
Thus, the theoretical convergence rate of \textbf{HT-AWGM}
is independent of $d$, which is observed in \Cref{fig:its}.

However, the parameters $\omega_3, \omega_4, \omega_5$ depend on $d$ which
result in different tolerances for the re-truncation and re-coarsening
step\footnote{In the graphics re-truncation and re-coarsening is counted as
one iteration step, though technically it is not a \textbf{HT-AWGM} iteration
step.}.

In \Cref{fig:ranks} we show the behavior of ranks of the numerical solution
$\bs u_k$.
The data points are sorted by rank, where for repeating ranks we took
the minimum of the corresponding residual.
For all dimensions $d$ we observe an exponential decay w.r.t.\ ranks, which
is according to expectation for the Laplacian. As stated in \Cref{rem:complex}
and consistent with the observations in \cite{MarkusSobolev},
we expect the ranks to scale logarithmically in the dimension.

In \Cref{fig:frames} we plot the sum of the supports of frames and the
corresponding residual. Since we are using cubic multi-wavelets, we expect the
convergence w.r.t.\ the support size to be of order 3 and the dimension
dependence to be slightly more than linear.

Finally, \Cref{fig:pcg} shows the number of \textbf{PCG} iterations
in each \textbf{HT-AWGM} iteration. We see that \textbf{PCG} requires
between 2 and 10 iterations to achieve a fixed error reduction
($\omega_2$) for all
dimensions $d$, since $\kappa(\bs A^\delta)$ does not depend on $d$.

\bigskip

We would like to emphasize that, unlike in classical non-tensor adaptive
methods, for high dimensional tensor methods ranks are
crucial for performance. The size of the wavelet discretization
affects the performance indirectly, since the maximum wavelet level affects
the ranks in the preconditioning. However, this is not necessarily a feature
solely of the preconditioning. Larger frames imply we are searching for
low dimensional manifolds in higher dimensional spaces. In the worst case
scenario, this implies the ranks of such manifolds will grow.

A few numerical considerations significantly improve the overall
performance. For \textbf{PCG} choosing the adaptive tolerance is a trade off
between the number iterations and how expensive each
iteration is. We found that choosing the adaptive tolerance $0.1$
yields best results. For experiments varying the adaptive tolerance
we refer to
\cite{ToblerPhd}.

Moreover, note that in each \textbf{PCG} iteration
the preconditioned matrix-vector product only has to be computed once, since
this can be avoided for computing the energy norm of the search
direction.
We are only interested in computing the residual and thus we can also
avoid computing an intermediate matrix-vector product and apply the
preconditioning, summation and truncation to the residual directly. This
gives the same result, but involves much lower intermediate ranks, since
the truncation tolerances are relative to the residual and not
$\|\bs A^\delta\bs u_k\|$.

Finally, in analogy to \cite{DBL2}, if we are interested in controlling
the error only in $L_2$, we can approximate the $L_2$ coefficients.
This means applying $\bs S^{-2}(\delta)\bs A$ instead of
$\bs S^{-1}(\delta)\bs A\bs S^{-1}(\delta)$ which greatly reduces the
computational cost.

\subsubsection*{Adaptivity}
In conclusion we would like to remark on the use of an adaptive discretization
for the model problem above. In a classical AWGM method applied to a smooth
problem like $-\Delta u=1$, we would expect the algorithm to recover a nearly
uniform grid. One might expect the same for the discretization of the frames
in a tensor format. However, as we will see, this is not the case.

\begin{figure}[ht!]
    \begin{subfigure}{0.47\textwidth}
    \includegraphics[height=42mm]{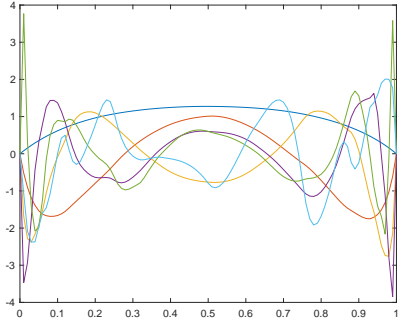}
    \caption{First 6 basis functions.\label{fig:basis}}
    \end{subfigure}
    \hfill
    \begin{subfigure}{0.47\textwidth}
        \includegraphics[height=42mm]{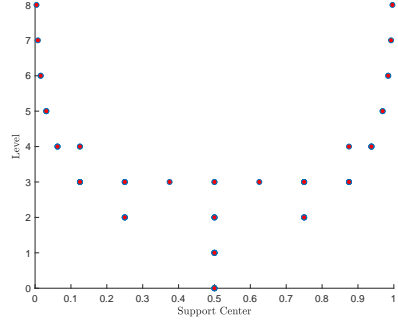}
        \caption{Support centers and levels of wavelets\label{fig:support}}
    \end{subfigure}
    \caption{Basis functions in the first dimension for $d=4$.}
    \label{fig:bfs}
\end{figure}

\Cref{fig:basis} shows the first 6
basis functions in the first dimension of $\bs u_k$ for $d=4$ after
15 inner iterations of \textbf{HT-AWGM}.
\Cref{fig:support}
shows the support centers of active wavelets.

Note that, since we are using cubic multiwavelets, there are more than one
mother scaling functions and mother wavelets. Thus, each point in the plot
can possibly represent more than one wavelet. In this particular case
the overall number of active wavelets is 66 and the maximum level
is 8. The number of wavelets for a uniform grid of up to level 8 is 1536,
which is roughly 23 times more than the number of active wavelets at this stage.
Recall that the computational complexity is linear in the number of active
wavelets.

As we can see, the one
dimensional basis functions exhibit boundary layers and oscillations
for increasing rank. A similar pattern is observed in all dimensions for all
values of $d$.
This behavior can be explained as follows.

Note that the computed one dimensional basis functions do not solve
the original $d$-dimensional equation. Instead, one can consider the best rank
one update. Given a current approximation $u_k$, we compute a rank one update
$v=v_1\otimes\cdots\otimes v_d$ such that
\begin{align*}
    J(u_k+v)=\min_{w=w_1\otimes\cdots\otimes w_d\in H^1_0(\Omega)}J(u_k+w),
\end{align*}
where $J:H^1_0(\Omega)\rightarrow\R$ is the Dirichlet functional
\begin{align*}
    J(u)=\int_\Omega\nabla^2 u\; dx-\int_\Omega fu\; dx,
\end{align*}
for some $f\in L_2(\Omega)$.
Considering the best approximation in the $j$-th dimension and fixing the
rest\footnote{I.e., performing ALS.}, we can compute the first variation as
\begin{align}
    \frac{d}{d\tau}J(u_k+v_1\otimes\cdots (v_j+\tau g)
    \cdots\otimes v_d)\Bigr|_{\tau=0}&=\notag\\
    =\kappa_1\int_{\Omega_j}v_j'g'\;dx_j+
    \kappa_2\int_{\Omega_j}v_jg \;dx_j-\langle R^k_j, g\rangle
    &=0,\quad\forall g\in H_0^1(\Omega_j),\label{eq:als}
\end{align}
with
\begin{align}
    \kappa_1&:=\prod_{i\neq j}\|v_i\|^2_{L_2},\quad
    \kappa_2:=\sum_{k\neq j}\|v_k'\|^2_{L_2}
    \prod_{\substack{i\neq k,\\ i\neq j}}\|v_i\|^2_{L_2},\quad
    \epsilon^{-1}:=\frac{\kappa_2}{\kappa_1}=
    \sum_{k\neq j}\left(\frac{\|v_k'\|_{L_2}}{\|v_k\|_{L_2}}\right)^2,\notag\\
    \langle R^k_j, g\rangle&:=
    \int_{\Omega_j}g\int_{\bigtimes_{i\neq j}\Omega_i}f\cdot
    \otimes_{k\neq j}
    v_k\;dx\notag\\
    &\quad-\int_{\Omega_j}g\int_{\bigtimes_{i\neq j}\Omega_i}
    \nabla^{d-1,\neq j}u_k\cdot\nabla^{d-1,\neq j}
    \otimes_{k\neq j}v_k\; dx\notag\\
    &\quad-\int_{\Omega_j}g'\int_{\bigtimes_{i\neq j}\Omega_i}
    \frac{\partial u_k}{\partial x_j}\cdot
    \otimes_{k\neq j}v_k\; dx.\label{eq:resals}
\end{align}
I.e., the basis functions in \Cref{fig:basis} solve \cref{eq:als}. This has
two consequences. First, this is no longer a Poisson equation, but rather
a reaction-diffusion equation that is singularly perturbed for
$\epsilon\rightarrow 0^+$. Indeed, we have observed that $\epsilon$ becomes
smaller as the rank grows. This explains the boundary layers and the adaptive
discretization in \Cref{fig:bfs}. Second, the right hand side in \cref{eq:als}
is the residual from \cref{eq:resals}. This term has 2 orders of regularity less
than the numerical approximation $u_k$. I.e., using basis functions of higher
regularity improves the regularity of the residual and thus the behavior of
the frames of the numerical approximation. Moreover, the residual also
introduces the oscillations visible in \Cref{fig:basis}.

\section{Conclusion}\label{sec:conclusion}

We introduced and analyzed an adaptive wavelet Galerkin
\\
scheme for high
dimensional elliptic equations.
To deal with the \emph{curse of dimensionality},
we utilized low rank tensor methods, specifically the Hierarchical Tucker
format. The method is adaptive both in the wavelet representation and
the tensor ranks.

We have shown that the method converges and that the numerical
solution has quasi-optimal ranks and wavelet representation.
The computational
complexity depends solely on the Besov regularity and low rank
approximability of the solution. We provided numerical experiments for the
Poisson equation for $d=2, 4, 8, 16$ and $32$ dimensions.

The method is well suited for problems where $d$ is large, provided certain
favorable separability assumptions are satisfied: the operator is
either low rank or can be accurately approximated by low rank operators;
the condition of the operator only mildly depends on the dimension;
the right hand side is either low rank or can be accurately approximated by
low rank functions. The dominating part of the complexity for \textbf{HT-AWGM}
are the ranks. Nonetheless, adaptivity in the wavelet representation pays off
computationally even for smooth problems like the Poisson equation.

\textbf{HT-AWGM} involves a re-truncation and a re-coarsening step to ensure
optimality w.r.t.\ ranks and wavelet representation. Although there are
evidence to support that both can be avoided, we were not able to devise
a rigorous framework to show this. In future work we want to construct a more
clever rank extension strategy, avoid both the
re-truncation and re-coarsening steps and consider different
Galerkin solvers.

\section*{Acknowledgements}

We would like to thank Markus Bachmayr, Rob Stevenson and Wolfgang Dahmen for
their very helpful comments on this work. This paper was partly
written when Mazen Ali was a visiting researcher at Centrale Nantes in
collaboration with Anthony Nouy. We acknowledge Anthony Nouy for
the helpful discussions and financial support. We are grateful to
the European Model Reduction Network (TD COST Action TD1307)
for funding.

\bibliographystyle{acm}
\bibliography{literature}

\end{document}